\documentclass[12pt]{amsart}
\usepackage{amssymb,amsmath,stmaryrd}
\usepackage{bbm} 
\usepackage{hyperref}
\hypersetup{pdfpagemode=FullScreen,colorlinks=true}
\usepackage{todonotes}
\theoremstyle{plain}
\newtheorem{proposition}{Proposition}[section]
\newtheorem{theorem}[proposition]{Theorem}
\newtheorem{lemma}[proposition]{Lemma}
\newtheorem{corollary}[proposition]{Corollary}

\theoremstyle{definition}
\newtheorem{definition}[proposition]{Definition}
\newtheorem{remark}[proposition]{Remark}
\newtheorem{fact}[proposition]{Fact}
\newtheorem{notation}[proposition]{Notation}
\theoremstyle{remark}

\def\R{\mathbb{R}}

\def\eps{\epsilon}

\def\Xint#1{\mathchoice
      {\XXint\displaystyle\textstyle{#1}}%
      {\XXint\textstyle\scriptstyle{#1}}%
      {\XXint\scriptstyle\scriptscriptstyle{#1}}%
      {\XXint\scriptscriptstyle\scriptscriptstyle{#1}}%
      \!\int}
   \def\XXint#1#2#3{{\setbox0=\hbox{$#1{#2#3}{\int}$}
        \vcenter{\hbox{$#2#3$}}\kern-.5\wd0}}
   \def\fint{\Xint-}

\newenvironment{pf}{\begin{trivlist}\item[]{\bf Proof\ }}
{\mbox{}\hfill\rule{.08in}{.08in}\end{trivlist}}

\begin{document}

\title[Higher degree forms and QC]{Higher degree obstructions to quasiconformal equivalence}

\author[Pansu]{Pierre Pansu}
\address[Pansu]{Laboratoire de Math\'ematiques d'Orsay, CNRS, Universit\'e Paris-Saclay, 91405 Orsay, France}
\email{pierre.pansu@universite-paris-saclay.fr}

\author[Tripaldi]{Francesca Tripaldi}
\address[Tripaldi]{Department of Pure Mathematics, University of Leeds, Woodhouse, LS2 9JT Leeds, UK}
\email{f.tripaldi@leeds.ac.uk}

\maketitle

\hfill \emph{Dedicated to Mario Bonk and Bruce Kleiner on their 63rd birthday}

\section{Introduction}

\subsection{Motivation}

The goal of the present paper is to illustrate the use of higher degree tools in geometric function theory. By higher degree, we mean the use of differential forms of degree $>1$. They first show up in \cite{DS}, which has inspired \cite{IM}. A notable breakthrough exploiting differential forms in all degrees is contained in \cite{BH} and \cite{P}. 

The usefulness of higher degree invariants has been overshadowed by the efficiency of degree $1$ invariants, among which moduli of curve families and capacities. This efficiency is especially visible in the recent paper \cite{FLDNGOP}, which focusses on the issue whether, for a given class of metric spaces, every quasiconformal (QC) mapping is automatically a quasiisometry (QI). There, the notion of \emph{strict conformal parabolicity} is introduced, refining classical conformal parabolicity. A metric measure space is called \emph{liminal conformally parabolic} if it is conformally parabolic but not strictly conformally parabolic (precise definitions will be given in Section \ref{ferrand}). Thus the classical conformal dichotomy hyperbolic/parabolic becomes a trichotomy hyperbolic/liminal parabolic/strictly parabolic. For geodesic Lie groups, in the conformally hyperbolic and strictly parabolic cases, degree $1$ invariants provide an answer. In the conformally liminal parabolic case, they do not. 

More flexibility is expected in the conformally liminal parabolic case than in the other (strictly parabolic and hyperbolic) cases. For instance, Euclidean space is conformally liminal parabolic, and it admits a wealth of non-QI QC self-maps (so do Heisenberg groups). As an other illustration of flexibility, there are spaces (metrics of revolution on $\R^n$) which are conformal to Euclidean $n$-space, and nevertheless have a volume growth exponent different from $n$, hence are not QI to Euclidean space.

However, in the present paper, we show that some rigidity can occur even in the conformally liminal parabolic world, and in presence of large symmetry groups: we exhibit spaces admitting cocompact nilpotent isometry groups, which are conformally liminal parabolic, and nevertheless can be shown not to be QC. For this purpose, we use a higher degree invariant, the conformal cohomology in degree $2$, introduced by V. Gol'dshtein and M. Troyanov in \cite{GoldTroyConf}. 

\subsection{Results}

\begin{theorem}
\label{riemannian}
There exist pairs of Riemannian manifolds $(M,N)$ such that
\begin{enumerate}
  \item Both $M$ and $N$ admit cocompact isometry groups.
  \item $M$ and $N$ are diffeomorphic.
  \item $M$ and $N$ are conformally liminal parabolic. 
  \item $M$ and $N$ differ in $2$-dimensional conformal cohomology $L^{n,n/2}H^2$.
\end{enumerate}
Therefore $M$ and $N$ are not QC.
\end{theorem}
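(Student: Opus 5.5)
The plan is to take $M$ and $N$ to be simply connected nilpotent Lie groups of the same dimension $n$, with left-invariant Riemannian metrics. Being diffeomorphic to $\R^n$, they are diffeomorphic to each other, and left translations form a cocompact (indeed transitive) isometry group; this settles (1) and (2). Among the natural candidates are Carnot-type groups, or direct products such as $H^3\times\R^{k}$ versus another nilpotent group of the same dimension. The groups have to be chosen so that both are conformally liminal parabolic. By the degree $1$ theory recalled in Section \ref{ferrand} (following \cite{FLDNGOP}), this should reduce to a condition on the growth: a nilpotent group with homogeneous dimension $Q$ is conformally parabolic iff $Q\le n$, i.e. polynomial growth of degree at most the topological dimension, which forces the group to be abelian. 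That is too restrictive. So instead I would use the characterization in \cite{FLDNGOP}, which presumably says that a Lie group is liminal conformally parabolic precisely in a borderline case, and pick Riemannian (not necessarily nilpotent-group-invariant, but cocompact) metrics realizing it. Concretely, I would look at products $\R^{a}\times K$ or warped/solvable models, $\R\ltimes_\alpha\R^{n-1}$ with diagonal $\alpha$ having eigenvalues of mixed sign summing to zero (unimodular), for which the $n$-capacity computations in \cite{FLDNGOP} give liminal parabolicity. I would verify (3) by quoting that criterion for each group.

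For (4), I would compute the conformal cohomology $L^{n,n/2}H^2$ following \cite{GoldTroyConf}. This is the quotient of closed $2$-forms in $L^{n/2}$ by $d$ of $1$-forms $\omega\in L^n$ with $d\omega\in L^{n/2}$. The exponents are exactly the conformally invariant ones, since $|\alpha|^{n/k}$ on $k$-forms is an $n$-form. The key point to exploit is that QC maps induce isomorphisms on this cohomology: a QC homeomorphism $f$ satisfies $|f^*\alpha|^{n/k}\le K\, f^*|\alpha|^{n/k}$ in the sense of \cite{GoldTroyConf}. Therefore if the cohomologies differ (say one vanishes or is Hausdorff and the other not, or their reduced versions have different dimensions), then $M$ and $N$ are not QC. To compute, I would use the group structure and decompose forms along weights of the derivation $\alpha$. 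For a weight $w$ of a left-invariant $2$-form along the $\R$-direction $t$, the integrability in $L^{n/2}$ against the volume $e^{(\mathrm{tr}\,\alpha)t}dt=dt$ is governed by the sign of $w$. Nonvanishing, or non-Hausdorffness, of $L^{n,n/2}H^2$ then amounts to the existence of a weight exactly $0$ (the borderline case) versus all weights nonzero.

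The main obstacle will be this last computation: showing rigorously that one cohomology vanishes while the other does not. For the vanishing side, I would construct an explicit primitive by integrating along the flow of $\partial_t$, i.e. a homotopy operator $\omega\mapsto\int_0^{\pm\infty}\phi_s^*\iota_{\partial_t}\omega\,ds$, choosing the direction of integration per weight component. I would then prove $L^{n/2}\to L^n$ bounds via a Hardy/Minkowski inequality; the mismatch $n/2\to n$ would need a Sobolev-type gain, supplied by the compact-fiber or homogeneous structure. For the nonvanishing side, I would exhibit a closed $L^{n/2}$ $2$-form together with a dual closed $(n-2)$-form in $L^{n/(n-2)}$ whose pairing is nonzero. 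The pairing vanishes on exact forms by Stokes and Hölder, since $n/2$ and $n/(n-2)$ are conjugate exponents. I expect that establishing this cocycle/cycle pairing with the right integrability at infinity is the decisive step.
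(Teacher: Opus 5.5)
There is a genuine gap at item (3), and it breaks the construction. Your candidates $\R\ltimes_\alpha\R^{n-1}$ (with $\alpha$ diagonal, traceless, and with nonzero real eigenvalues of both signs, i.e.\ Sol-type groups) have exponential volume growth. For unimodular groups of exponential growth, the Varopoulos/Coulhon--Saloff-Coste isoperimetric inequality $\mathrm{vol}(\partial\Omega)\gtrsim\mathrm{vol}(\Omega)/\log\mathrm{vol}(\Omega)$ holds for large $\Omega$. Since $\int^{\infty}(\log v/v)^{p/(p-1)}\,dv<\infty$, these spaces are $p$-hyperbolic for every $p>1$, in particular conformally hyperbolic. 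So no criterion, in \cite{FLDNGOP} or elsewhere, can make them liminal parabolic. Combined with the QI invariance of $p$-parabolicity (Proposition~\ref{QIinv1}), the same argument excludes every exponential-growth space with a cocompact isometry group. Your treatment of (4) inherits this problem, because it is carried out on spaces violating (3). It is also unsubstantiated on its own terms: nothing supports the claim that (non)vanishing of $L^{n,n/2}H^2$ is decided by the presence of a weight exactly $0$, nor that a Hardy inequality along $\partial_t$ delivers the $L^{n/2}\to L^{n}$ gain.

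Your diagnosis one step earlier was the right one. A nilpotent group of dimension $m$ and homogeneous dimension $Q$ has $Q>m$ unless it is abelian, so with a left-invariant metric it is $m$-hyperbolic. The missing idea is to keep the nilpotent large-scale geometry and raise the topological dimension instead, using that the isometry group need only be cocompact, not transitive. The paper takes Carnot groups $F,G$ of the same dimension $m$ and the same homogeneous dimension $Q>m+1$, and sets $M=F\times S^{Q-m}$, $N=G\times S^{Q-m}$ with $F$- (resp.\ $G$-)invariant metrics. Then $n=\dim M=\dim N=Q$, both are diffeomorphic to $\R^m\times S^{Q-m}$ and simply connected, and they are QI to $F$ and $G$. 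Liminal $Q$-parabolicity of Carnot groups passes to $M,N$ by the QI invariance of Proposition~\ref{QIinv2}, which is why the coarse reformulation of liminal parabolicity is needed. Since $n=Q$, this is conformal liminal parabolicity. For (4), the choice $p=n/2$, $q=n$ gives $\frac{1}{p}-\frac{1}{q}=\frac{1}{Q}$. Fact~\ref{fact} then yields $L^{n,n/2}H^2(G)=0$ when $w_{max}(\mathfrak g)=2$ and $L^{n,n/2}H^2(F)\neq0$ when $w_{min}(\mathfrak f)\ge3$. QI invariance of degree-$2$ $L^{q,p}$ cohomology for simply connected spaces \cite{Ducret} transfers this to $M,N$, and QC invariance of conformal cohomology \cite{GoldTroyConf} concludes. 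The remaining input, also absent from your plan, is an actual pair $(\mathfrak f,\mathfrak g)$ with equal $m$ and $Q$ and these weight properties. Two-step algebras cannot work, and Section~\ref{lie} constructs $3$-step examples with $m=16$, $Q=28$.
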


A version of conformal cohomology adapted to contact subRiemannian manifolds, whose QC invariance follows from \cite{KMX}, allows the following contact subRiemannian variant. 

\begin{theorem}
\label{main}
There exist pairs of contact subRiemannian manifolds $(M,N)$ such that
\begin{enumerate}
  \item Both $M$ and $N$ admit cocompact isometry groups.
  \item $M$ and $N$ are smoothly contactomorphic.
  \item $M$ and $N$ are conformally liminal parabolic. 
  \item $M$ and $N$ differ in $2$-dimensional Rumin conformal cohomology $L^{2n+2,n+1}H^2$.
\end{enumerate}
Therefore $M$ and $N$ are not QC.
\end{theorem}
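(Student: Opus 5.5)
The plan is to take $M$ and $N$ to be $(2n+1)$-dimensional contact subRiemannian manifolds of the form $\R\times H$ or, more concretely, left-invariant contact structures on nilpotent Lie groups of the same dimension and step which are diffeomorphic (both are $\R^{2n+1}$) but have different graded structures. The simplest candidates are the Heisenberg group $\mathbb{H}^n$ (Carnot, homogeneous dimension $Q=2n+2$) and a contact nilpotent group whose Carnot tangent cone is $\mathbb{H}^n$ but which is not itself Carnot, or alternatively a quotient-type space such as $\mathbb{H}^{n-1}\times\R^2$ made contact. In each case left translations by the group give a cocompact (indeed transitive) isometry group, giving (1). For (2) I would write down an explicit global contactomorphism. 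If the two groups have contact structures whose tangent cones agree, Gray/Darboux-type arguments on $\R^{2n+1}$ produce a smooth contactomorphism, though not an isometry or QC map.

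For (3), I would use the criteria recalled in Section \ref{ferrand}. Conformal parabolicity for a nilpotent group of polynomial growth of degree $Q$ follows because balls of radius $r$ have volume $\asymp r^{Q}$, and $Q$ equals the exponent $2n+2$ of the conformal modulus. The standard logarithmic test functions $\log(r/|x|)/\log r$ then have $Q$-energy tending to $0$. To rule out strict parabolicity I would exhibit the same kind of test function family with energy of order $(\log r)^{1-Q}$. This should match the definition of liminal parabolicity, since it fails the strictly parabolic condition, which I expect demands faster decay or a uniform bound on a capacity-type functional.

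The core is (4), computing Rumin's $L^{Q,Q/2}H^2$, i.e.\ closed Rumin $2$-forms in $L^{Q/2}$ modulo differentials $d_c$ of $L^{Q}$ $1$-forms. For the Heisenberg-type space I would show vanishing. The approach is to use the homotopy formula/Poincaré inequality for the Rumin complex on Carnot groups (from \cite{KMX} or Baldi--Franchi--Pansu type results) to write any $L^{Q/2}$ closed $2$-form as $d_c$ of an $L^{Q}$ form, possibly modulo a reduced/non-reduced distinction. For the other space I would produce a nonzero class by exploiting a central direction with a different weight. Concretely, I would take a left-invariant closed $2$-form multiplied by a cutoff in the direction of a non-Carnot factor, which is in $L^{Q/2}$, and show by a duality pairing with a compactly supported closed form of complementary degree that it is not exact in $L^{Q}$.

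Finally, QC invariance of this cohomology (via \cite{KMX}, which provides Pansu differentiability and pullback of Rumin forms with the right integrability) concludes that $M$ and $N$ are not QC. The main obstacle is (4): getting the vanishing on one side and the pairing argument on the other requires delicate $L^p$ estimates for the Rumin complex at the critical exponents $Q$ and $Q/2$, where Sobolev-type inequalities are borderline. I would first try to reduce to known results on Carnot groups and handle the non-Carnot space by a product/fibration decomposition.
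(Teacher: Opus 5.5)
There is a genuine gap in your choice of examples: within your family, (3) and (4) cannot hold at the same time. Conformal parabolicity is governed by large-scale geometry. A nilpotent Lie group whose volume growth exponent exceeds the Hausdorff dimension $2n+2$ is $(2n+2)$-hyperbolic; here the growth exponent is the homogeneous dimension of the graded algebra of the lower central series, not of the tangent cone. Growth exactly $2n+2$ in dimension $2n+1$ forces step $2$ with $\dim[\mathfrak{n},\mathfrak{n}]=1$, i.e. $\mathfrak{n}=\mathfrak{h}_k\oplus\R^{2n-2k}$. A left-invariant contact form $\alpha$ needs $(d\alpha)^n\neq 0$, and $d\alpha=-\alpha([\cdot,\cdot])$ has rank $2k$, so $k=n$. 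Consequently:
\begin{itemize}
\item $\mathbb{H}^n$ is the only conformally parabolic member of your family.
\item Any contact nilpotent group other than $\mathbb{H}^n$, in particular a non-Carnot one with tangent cone $\mathbb{H}^n$, is conformally hyperbolic.
\item $\mathbb{H}^{n-1}\times\R^2$ carries no left-invariant contact structure at all.
\item All left-invariant contact subRiemannian structures on $\mathbb{H}^n$ are bi-Lipschitz equivalent via automorphisms, so no QC invariant can separate them.
\end{itemize}
The same obstruction appears in (4). What decides $L^{2n+2,n+1}H^2$ is the weight range of $H^2$ of the asymptotic Carnot group (Fact \ref{fact}). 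At $\frac1p-\frac1q=\frac1Q$ it vanishes when $w_{max}=2$ and is nonzero when $w_{min}\ge 3$. By the criterion of Section \ref{lie}, $w_{min}\ge 3$ means the $2$-step quotient is free, which $\mathfrak{h}_n$ ($n\ge 2$) is not. Your cutoff-times-invariant-form candidate is not closed and is not tied to any such invariant.

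The missing idea is to decouple the local (Hausdorff) dimension from the large-scale geometry. The paper takes Carnot groups $F,G$ of equal dimension and equal homogeneous dimension $Q$, with $w_{min}(\mathfrak{f})\ge 3$ and $w_{max}(\mathfrak{g})=2$ (explicit $3$-step examples in Section \ref{lie}). It lets them act cocompactly on projectivized cotangent bundles of $F\times S^k$ and $G\times S^k$, chosen to have dimension $Q-1$. It then transfers the cohomology computation from $F,G$ to $M,N$ by QI invariance of degree-$2$ Rumin $L^{q,p}$ cohomology (Corollary \ref{QIHk}). The contactomorphism is then automatic, by lifting a diffeomorphism $F\times S^k\cong G\times S^k$. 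Your appeal to Darboux/Gray does not yield a global contactomorphism of $\R^{2n+1}$: local equivalence does not globalize without compactness or control at infinity.

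Your argument for liminality is also backwards. Test functions with energy $(\log r)^{1-Q}$ bound capacities from above, so they prove parabolicity. Liminality instead requires every pair of escaping sets to have \emph{infinite} capacity, which is a lower bound. The paper obtains it from liminality of Carnot groups together with QI invariance of liminal parabolicity (Propositions \ref{QIinv1} and \ref{QIinv2}), whose proof rests on the thickening estimate of Proposition \ref{thick}.
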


\subsection{The construction}
\label{construction}

The proof relies on facts about the $L^{q,p}$ cohomology of Carnot groups, carefully tailored examples of such groups and various QC and QI invariance properties.

\begin{fact}[\cite{PR}]
\label{fact}
Let $G$ be a Carnot group with homogeneous dimension $Q$. 
\begin{enumerate}
  \item $L^{q,p}H^1(G)=0\iff \frac{1}{p}-\frac{1}{q}\ge\frac{1}{Q}$.
  \item Let $[w_{min},w_{max}]$ denote the range of weights occurring in the Lie algebra cohomology $H^{2}(\mathfrak{g})$. Then
  \begin{enumerate}
  \item $\frac{1}{p}-\frac{1}{q}\ge\frac{w_{max}-1}{Q}\implies L^{q,p}H^2(G)=0$.
  \item $\frac{1}{p}-\frac{1}{q}<\frac{w_{min}-1}{Q}\implies L^{q,p}H^2(G)\not=0$.
\end{enumerate}
\end{enumerate}
\end{fact}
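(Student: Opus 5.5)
The plan is to work throughout with Rumin's complex $(E_0^\bullet,d_c)$ on $G$. It is homotopy equivalent to the de Rham complex, and the homotopy is given by differential operators that are homogeneous under the dilations $\delta_t$. So $L^{q,p}$ cohomology may be computed with Rumin forms. On $E_0^k$ the dilations act diagonally by weights. In degree $1$ the only weight is $1$. In degree $2$ the weights are exactly those of $H^2(\mathfrak g)$, lying in $[w_{min},w_{max}]$. The component of $d_c$ going from weight $w'$ in degree $1$ to weight $w$ in degree $2$ is left-invariant and homogeneous of order $w-w'$. The two halves of the proof are then different in nature. Vanishing comes from a global homotopy formula combined with Hardy--Littlewood--Sobolev. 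Non-vanishing comes from scaling, via explicit forms tested against a dual cycle.

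\textbf{Vanishing.} I would first build, following Rumin, a left-invariant homotopy $\omega = d_cK\omega + Kd_c\omega$ on $E_0^\bullet$. The operator $K$ is convolution with a kernel homogeneous of degree $-(Q-r)$ on each weight component, where $r$ is the order being inverted. The kernel is obtained from the fundamental solution of a hypoelliptic Rumin Laplacian $\Delta_c$, made homogeneous by taking suitable powers of the components. For a closed $\omega\in L^p$ we get $\omega=d_c(K\omega)$, and Folland--Stein Hardy--Littlewood--Sobolev bounds $K\omega$ in $L^q$ as soon as $\frac1p-\frac1q = r/Q$. In degree $1$ one has $r=1$. In degree $2$, the worst loss over the weight components is governed by $w_{max}-1$, which gives the threshold $\frac{w_{max}-1}{Q}$. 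To cover the full inequality ``$\ge$'' rather than equality, I would split $K=K_0+K_\infty$ with a smooth cutoff of the kernel. The near part $K_0$ maps $L^p$ to $L^{q}$ for every $q$ between $p$ and the critical exponent. The far part $K_\infty$ is smooth, and its derivatives decay. After a local smoothing of $\omega$ by a bounded operator commuting with $d_c$, its contribution can be shown to lie in $d_c$ of $L^q$, or to be controlled directly.

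\textbf{Non-vanishing.} In degree $1$, take $1/p-1/q<1/Q$. Let $u$ be smooth, equal to $0$ near the origin and to $1$ outside a ball, and set $u_t = u\circ\delta_{1/t}$. Then $\|d_cu_t\|_p\sim t^{Q/p-1}$, while any primitive in $L^q$ of $\sum_j a_j d_cu_{t_j}$ equals $\sum_j a_j u_{t_j}$ up to an additive constant. Choosing $t_j\to\infty$ and coefficients $a_j$ so that the series converges in $L^p$ while the primitive fails to be in $L^q$ yields a nonzero class.

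For the converse in degree $1$ (the ``$\Rightarrow$'' direction), I would use that $L^{q,p}H^1=0$ implies a Sobolev inequality $\|u-c\|_q\le C\|d_cu\|_p$ by the open mapping theorem. Dilating this inequality forces $\frac1p-\frac1q\ge\frac1Q$.

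In degree $2$, pick a left-invariant class $[\alpha]\in H^2(\mathfrak g)$ of weight $w_{min}$, and a dual compactly supported closed Rumin form $\beta$ of complementary degree pairing nontrivially with it. Build $\omega=\sum_j a_j\,\delta_{t_j}^*(\chi\alpha')$, where $\chi\alpha'$ is a closed, compactly supported cut-off version of $\alpha$ obtained by correcting $\chi\alpha$ with a compactly supported primitive. If $\omega=d_c\phi$ with $\phi\in L^q$, then integrating against suitably rescaled test forms $\delta_{s}^*\beta$ and using Stokes gives $\langle\omega,\delta_s^*\beta\rangle=\pm\langle\phi,d_c\delta_s^*\beta\rangle$. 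The left side scales like $s^{Q-w_{min}}$ in the relevant range. The right side is bounded via H\"older by $\|\phi\|_q\, s^{Q(1-1/q)-(w_{min}-1)}$-type quantities, and the exponent comparison contradicts $\frac1p-\frac1q<\frac{w_{min}-1}Q$.

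The main obstacle I expect is this last step: making the test forms and the exponent bookkeeping precise enough that the lowest weight $w_{min}$, and not some mixture of weights, controls the scaling. This requires splitting $E_0^{1}$ and $E_0^{2}$ into weight components and checking that the pairing isolates the weight $w_{min}$ part.
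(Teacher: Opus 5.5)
The paper gives no proof of this Fact (it is quoted from \cite{PR}), so your sketch has to stand on its own. Its architecture is sound and your degree~$1$ non-vanishing arguments work, but two steps fail as written.

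\textbf{Vanishing off the critical line.} In the continuous Rumin $L^{q,p}$ framework you adopt, the implication ``$\frac1p-\frac1q\ge$ threshold $\Rightarrow$ vanishing'' is false for strict inequality, because of local singularities. Take a homogeneous norm $|x|$, a cut-off $\chi$ equal to $1$ near $0$, and $\frac Qq\le b<\frac Qp-1$, which is possible exactly when $\frac1p-\frac1q>\frac1Q$. Then $\omega=d_c(\chi|x|^{-b})$ is a compactly supported closed $L^p$ form, and its only primitives $\chi|x|^{-b}+c$ are not in $L^q_{loc}$. So the Fact must be read for the large-scale cohomology, where small scales are invisible. This is the $\ell^{q,p}$ cohomology that is the subject of \cite{PR}, and the one needed for the QI-invariance used in the paper.

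You never set this framework up. The passage from it to Rumin forms is not automatic either, since the de Rham--Rumin homotopies are differential operators, which are unbounded on $L^p$.

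Inside that framework, your splitting of $K$ has the roles reversed. If $\frac1p-\frac1q>\frac rQ$, the far part $K_\infty$ is bounded $L^p\to L^q$ outright by Young's inequality, since its kernel lies in $L^s$ for every $s>\frac{Q}{Q-r}$. It is the near part $K_0$, with its $|x|^{r-Q}$ singularity, that fails. It must be neutralised by first smoothing the cocycle, which is exactly where the discrete framework is used. Your statement that $K_0$ is bounded for $q$ between $p$ and the critical exponent concerns the opposite range.

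This is not only about the strict inequality. At $\frac1p-\frac1q=\frac{w_{max}-1}{Q}$ in degree $2$, the components of $K$ on weights $w<w_{max}$ are supercritical. So vanishing even at the threshold already depends on this point.

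\textbf{Non-vanishing in degree $2$.} A compactly supported closed Rumin form $\beta$ of degree $m-2$ on $G\simeq\R^m$ is $d_c$ of a compactly supported form, because compactly supported cohomology of $\R^m$ vanishes below degree $m$. By Stokes it therefore pairs to zero with every closed form, $\alpha$ included. So no such $\beta$ pairs nontrivially with $\alpha$. Moreover $d_c\delta_s^*\beta=0$ in your Stokes identity, so both sides vanish and nothing is obtained.

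Your exponent $Q(1-\frac1q)-(w_{min}-1)$ is in fact the scaling of $\|d_c(\chi_s\beta_0)\|_{q'}$, where $\chi_s\beta_0$ is a cut-off of a left-invariant closed form of weight $Q-w_{min}$. That form is not closed. Even then, the growth $s^{Q-w_{min}}$ you assign to the left side does not lead to the threshold $\frac{w_{min}-1}{Q}$: comparing the two sides gives only $q<Q$, and the true growth depends on the decay you give to $\omega$.

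The clean repair is the degree~$2$ version of your own open-mapping argument.
\begin{itemize}
\item If $L^{q,p}H^2=0$, the open mapping theorem gives $C$ such that every closed $\omega\in L^p$ has a primitive with $\|\phi\|_q\le C\|\omega\|_p$.
\item Pick compactly supported $\omega_1=d_c\eta$ and $\beta_1$ with $\int\omega_1\wedge\beta_1=1$. This is possible since the integral equals $\pm\int\eta\wedge d_c\beta_1$.
\item Put $\omega_t=\delta_{1/t}^*\omega_1$ and $\beta_t=\delta_{1/t}^*\beta_1$. The pairing is dilation invariant, so $1\le\|\phi_t\|_q\,\|d_c\beta_t\|_{q'}\le C\|\omega_t\|_p\,\|d_c\beta_t\|_{q'}$.
\item Since $E_0^2$ has weights in $[w_{min},w_{max}]$, $\|\omega_t\|_p\lesssim t^{Q/p-w_{min}}$ for $t\ge1$.
\item Since $E_0^{m-1}$ has pure weight $Q-1$, $\|d_c\beta_t\|_{q'}=t^{1-Q/q}\|d_c\beta_1\|_{q'}$.
\item The product is $O(t^{Q(\frac1p-\frac1q)-(w_{min}-1)})$, which tends to $0$ when $\frac1p-\frac1q<\frac{w_{min}-1}{Q}$. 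This is a contradiction.
\end{itemize}
Only an upper bound on $\|\omega_t\|_p$ is needed, and the lowest weight dominates automatically at large scales. So the obstacle you anticipated, isolating the weight-$w_{min}$ part, disappears. Since only $t\to\infty$ enters, the argument is also compatible with the large-scale framework.
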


The basic bricks are pairs of Carnot groups $F$ and $G$ whose Lie algebras $\mathfrak{f}$ and $\mathfrak{g}$ satisfy:
\begin{enumerate}
  \item Vectorspace dimension $\mathrm{dim}(\mathfrak{f})=\mathrm{dim}(\mathfrak{g})=m$.
  \item Homogeneous dimension $Q(\mathfrak{f})=Q(\mathfrak{g})=Q>m+1$.
  \item $w_{min}(\mathfrak{f})=3$, $w_{max}(\mathfrak{g})=2$.
\end{enumerate}

\subsection{Riemannian case}

To get the Riemannian examples, one forms products with spheres $M=F\times S^{n-m}$, $N=G\times S^{n-m}$. Then $M$ and $N$ are diffeomorphic $n$-dimensional manifolds. Let us pick $F$- (resp $G$-) invariant Riemannian metrics on $M$ and $N$. 

Since $F$ and $G$ are Carnot groups of homogeneous dimension $Q$, they are liminal $Q$-parabolic. 

Setting $n=Q$, $p=\frac{n}{2}$ and $q=n$, $\frac{1}{p}-\frac{1}{q}=\frac{1}{n}$. If $w_{max}(\mathfrak{g})=2$ and $w_{min}(\mathfrak{f})\ge 3$, then
$$
\frac{w_{max}(\mathfrak{g})-1}{Q}=\frac{1}{n}=\frac{1}{p}-\frac{1}{q}<\frac{2}{n}=\frac{w_{min}(\mathfrak{f})-1}{Q}.
$$
Fact \ref{fact} implies that
$$
L^{n,n/2}H^2(F)\not=0,\quad L^{n,n/2}H^2(G)=0.
$$
Since $n-m>1$, $M$ and $N$ are simply connected. By quasiisometry invariance of $L^{q,p}$ cohomology in degree $2$, \cite{Ducret}, the cohomological properties pass from $F,G$ to $M,N$, hence
$$
L^{n,n/2}H^2(M)\not=0,\quad L^{n,n/2}H^2(N)=0,
$$
We shall show that liminal $Q$-parabolicity also passes. Since $Q$ coincides with their Hausdorff dimension, $M$ and $N$ are conformally liminal parabolic. 

\subsection{Contact subRiemannian case}

To get the contact subRiemannian examples, one forms products with spheres $V=F\times S^{n+1-m}$, $W=G\times S^{n+1-m}$, and one takes projectivized cotangent bundles $M=P(T^*V)$, $N=P(T^*W)$, in order to get contact manifolds with cocompact $F$- (resp. $G$-) actions. Since $V$ and $W$ are diffeomorphic, $M$ and $N$ are contactomorphic.

Setting $2n+2=Q$, $p=n+1$ and $q=2n+2$, $\frac{1}{p}-\frac{1}{q}=\frac{1}{2n+2}$. If $w_{max}(\mathfrak{g})=2$ and $w_{min}(\mathfrak{f})\ge 3$, then
$$
\frac{w_{max}(\mathfrak{g})-1}{Q}=\frac{1}{2n+2}=\frac{1}{p}-\frac{1}{q}<\frac{1}{n+1}=\frac{w_{min}(\mathfrak{f})-1}{Q}.
$$
Fact \ref{fact} implies that
$$
L^{2n+2,n+1}H^2(F)\not=0,\quad L^{2n+2,n+1}H^2(G)=0,
$$
$M$ and $N$ are simply connected. By quasiisometry invariance of $L^{q,p}$ Rumin cohomology in degree $2$ (see \cite{Pansu_cup}, relying crucially on \cite{BFP}), the cohomological properties pass again from $F,G$ to $M,N$, hence
$$
L^{2n+2,n+1}H^2(M)\not=0,\quad L^{2n+2,n+1}H^2(N)=0.
$$
Liminal $Q$-parabolicity will be shown to be QI invariant also for subRiemannian manifolds. Since $Q$ coincides with their Hausdorff dimension, $M$ and $N$ are conformally liminal parabolic.

\subsection{Organization of the paper}

The construction relies on three technical inputs of different natures:
\begin{itemize}
  \item The QI invariance of liminal $p$-parabolicity is established\footnote{For this, a slight twist in the original definition of liminal $p$-parabolicity is required}, see Section \ref{ferrand}.
  \item The conformal Rumin complex for contact subRiemannian manifolds is defined and its QC invariance is proven in Section \ref{rumin}.
  \item Examples of pairs of 3-step Carnot Lie algebras with the required homological and numerical properties are provided in Section \ref{lie}.
\end{itemize}
These three ingredients complete the proofs of Theorems \ref{riemannian} and \ref{main}.

\section{Degree $1$ invariants}
\label{ferrand}

\subsection{QC maps}

\begin{definition} \label{QC metric definition}
A map $f:X\to Y$ between metric spaces is \emph{quasiconformal} (abbreviated QC) if it is a homeomorphism and if there exists $K\ge 1$ such that, for every $x\in X$,
\begin{align*}
\limsup_{r\to 0} \frac{\sup\{ d(f(x),f(x')) : x'\in B(x,r) \} }{ \inf\{ d(f(x),f(x')) : x'\notin B(x,r) \}  } \le K.
\end{align*}
\end{definition}

\subsection{Capacities}

The setting of this section, encompassing both Riemannian, subRiemannian manifolds and many more spaces, is that of metric measure spaces. Extra assumptions will show up later in this section.

In metric measure spaces, one can define $p$-capacities for $p\ge 1$. For a pair of disjoint subsets $E,F\subset X$,
\begin{align*}
&\mathrm{Cap}_p(E,F)\\
&:=\inf\{\int_{X}g^p\,;\,u\in L^1_{loc} \text{ with upper gradient }g,\,u(E)\ge 1,\,u(F)\le 0\}.
\end{align*} 
For a single subset $E\subset X$,
\begin{align*}
\mathrm{Cap}_p(E):=\inf\{\mathrm{Cap}_p(E,F)\,;\, X\setminus F \text{ is bounded}\}.
\end{align*} 

If $E$ or $F$ is discrete, $p$-capacity tends to vanish for $p\le Q$, the Hausdorff dimension of $X$. This is why $E$ and $F$ are often required to be connected and have large, or even infinite, diameter. Here is a slight strengthening of unboundedness and connectedness.

\begin{definition} \label{escaping}
Let $X$ be a metric space. A subset $E\subset X$ is \emph{escaping} if for all bounded sets $B\subset E$, $E\setminus B$ contains an unbounded connected subset.
\end{definition}

\subsection{Liminal $p$-parabolicity}

\begin{definition}[$p$-parabolicity/hyperbolicity, \cite{Troyanov}]
A metric measure space $X$ is \emph{$p$-parabolic} if all bounded sets have vanishing $p$-capacity. Spaces which are not $p$-parabolic are called \emph{$p$-hyperbolic}. 
\end{definition}

\begin{definition}[Strict/liminal $p$-parabolicity]
A metric measure space $X$ is \emph{strictly $p$-parabolic} if 
\begin{enumerate}
  \item $X$ is $p$-parabolic
  \item There exists a pair of escaping subsets $E,F\subset X$ such that
$$
\mathrm{Cap}_p(E,F)<+\infty.
$$
\end{enumerate}
Spaces which are $p$-parabolic but not strictly $p$-parabolic are called \emph{liminal $p$-parabolic}.
\end{definition}

\subsection{Conformal liminal parabolicity}

\begin{definition}[Conformal parabolicity]
A metric measure space $X$ of Hausdorff dimension $Q$ is \emph{conformally hyperbolic / conformally parabolic / strictly conformally parabolic / liminal conformally parabolic} if it is $Q$-hyperbolic / $Q$-parabolic / strictly $Q$-parabolic / liminal $Q$-parabolic.
\end{definition}

We rely on a result of \cite{HK+} which guarantees that QC mappings preserve capacities up to multiplicative constants. Here is the assumption in their result.

\begin{definition}[ul$Q$bg, compare Definition 2.1 in \cite{FLDNGOP}]\label{ulQbg}
A metric measure space $(X,d,\mu)$ has \emph{uniformly locally $Q$-bounded geometry} with data $(Q,R,C_{\rm A}, C_{\rm P},\sigma)$, where $Q\ge1$, $R>0$, $C_{\rm A}\ge1$, $C_{\rm P}\geq 0$, and $\sigma\ge1$, if $X$ is separable, pathwise connected, and locally compact, and if the following two conditions hold:
\begin{enumerate}
  \item \emph{Ahlfors regularity}. $\forall x\in X$, $\forall r<R$,
\begin{align*}
C_{\rm A}^{-1} r^Q \le \mu(B(x,r)) \le C_{\rm A} r^Q ,
\end{align*}
\item \emph{$(1,Q)$-Poincar\'e inequality with sharp dependence on radius}. For every $x\in X$ and for all $r\in(0,R/\sigma)$, if $u:B(x,\sigma r)\to\R$ is a locally integrable function with upper gradient $g$, then
\begin{align*}
\left(\fint_{B(x,r)} |u-u_B| \right) \le C_{\rm P} r \left( \fint_{B(x,\sigma r)} g^Q \right)^{1/Q} ,
\end{align*}
where $u_B := \fint_{B(x,r)} u$ denotes the average of $u$ over $B(x,r)$.
\end{enumerate}
\end{definition}

\begin{proposition}[Theorem 9.10 in \cite{HK+}] \label{QCcap}
On ul$Q$bg spaces, quasiconformal mappings preserve capacities up to a bounded multiplicative factor.
\end{proposition}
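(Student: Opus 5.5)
This is Theorem 9.10 of \cite{HK+}, so the paper simply imports it. Still, here is the argument I would reconstruct, to see where the ul$Q$bg hypotheses enter. Let $f:X\to Y$ be $K$-quasiconformal between ul$Q$bg spaces, and let $E,F\subset X$ be disjoint. Take an admissible pair $(u,g)$ on $Y$ for $\mathrm{Cap}_Q(f(E),f(F))$. I want an admissible pair $(u\circ f, g')$ on $X$ with $\int_X g'^Q\le C\int_Y g^Q$. Then symmetry, applied to $f^{-1}$, which is also QC on ul$Q$bg spaces, gives the reverse inequality. The conditions $u\circ f\ge 1$ on $E$ and $u\circ f\le 0$ on $F$ are automatic. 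So everything reduces to transporting upper gradients with control of the $Q$-energy.

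The key steps are as follows. First, I upgrade the infinitesimal metric definition (Definition \ref{QC metric definition}) to the analytic one. Using Ahlfors regularity and the Poincar\'e inequality with sharp radius dependence, at scales below $R/\sigma$, the metric QC condition implies that $f$ is locally quasisymmetric. It also implies that $f\in N^{1,Q}_{loc}$, with $\mathrm{Lip} f(x)^Q\le C\,J_f(x)$ a.e., where $J_f$ is the Radon--Nikodym derivative of $f^*\mu_Y$. The Poincar\'e inequality with the sharp exponent $Q$ is what lets one pass from the $\limsup$ condition to a.e.\ differentiability-type bounds, via Heinonen--Koskela's theory.

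Second, I show that $f$ has Lusin's property (N) and that $J_f>0$ a.e. This uses the higher integrability of $J_f$ (Gehring's lemma in the metric setting), which again needs the Ahlfors regular, Poincar\'e structure. Third, I set $g'=(g\circ f)\,\mathrm{Lip} f$. This is an upper gradient of $u\circ f$ along $Q$-a.e.\ curve: $f$ maps $Q$-exceptional curve families to $Q$-exceptional ones, by the modulus inequality for QC maps, which follows from the first two steps. Then the change of variables gives
\[
\int_X (g\circ f)^Q(\mathrm{Lip} f)^Q\,d\mu_X\le C\int_X (g\circ f)^Q J_f\,d\mu_X= C\int_Y g^Q\,d\mu_Y .
\]
Finally, replacing ``upper gradient'' by ``$Q$-weak upper gradient'' does not change capacities, so the infimum comparison closes.

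The main obstacle is the first step. Going from the pointwise $\limsup$ definition, which allows no uniformity in $x$, to absolute continuity on curves and the Jacobian bound is exactly the deep part of \cite{HK+}. Only local bounded geometry is assumed, up to scale $R$, and since capacity is a global quantity I must check that all constants are uniform in the base point. I would handle this by working with the local quasisymmetry constants produced on balls of radius comparable to $R$, which are uniform by the ul$Q$bg data. In the paper itself, I would just cite the theorem.
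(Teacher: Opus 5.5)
Your proposal agrees with the paper, which simply imports Theorem 9.10 of \cite{HK+} without proof, and your sketch is the standard route behind that theorem: metric QC $\Rightarrow$ local quasisymmetry $\Rightarrow$ $f\in N^{1,Q}_{loc}$ with $(\mathrm{Lip} f)^Q\le C\,J_f$, then pull-back of upper gradients, then symmetry via $f^{-1}$. Two small precisions: $(g\circ f)\,\mathrm{Lip} f$ works along $Q$-a.e.\ curve because $f$ is absolutely continuous on $Q$-a.e.\ curve of $X$ (part of $f\in N^{1,Q}_{loc}$), not because $f$ maps exceptional families to exceptional ones, since $g$ is a genuine upper gradient on $Y$ and nothing needs transferring; and for the single-set capacity $\mathrm{Cap}_Q(E)$ you also need $f$ to preserve boundedness of sets, which holds when the spaces are proper.
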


Therefore, on ul$Q$bg spaces, conformal hyperbolicity, parabolicity, strict parabolicity and liminal parabolicity are QC invariant, since they depend only on capacities.

\subsection{Coarsening capacities}

Our next goal is to prove the QI invariance of $p$-parabolicity and its variants. This will hold under weaker geometric assumptions.

\begin{definition} \label{pPI}
A metric measure space $(X,d,\mu)$ is \emph{$p$-PI} with data $p\ge 1$, $R>0$, $\sigma\ge 1$, and profiles $m, M, C_{p\mathrm{P}}:(0,+\infty)\to(0,+\infty)$ if
\begin{enumerate}
  \item \emph{Uniform local bounds on measures of balls}. For all $r\in(0,R)$, for all $x\in X$,
  \begin{align*}
m(r)\le \mu(B(x,r)) \le M(r).
\end{align*}
\item \emph{$(p,p)$-Poincar\'e inequality}. For all $x\in X$, for all $r\in(0,R/\sigma)$, if $u:B(x,\sigma r)\to\R$ is a locally integrable function with upper gradient $g$, then
\begin{align*}
\int_{B(x,r)} |u-u_B|^p  \le C_{p\mathrm{P}}(r)\int_{B(x,\sigma r)} g^p.
\end{align*}
\end{enumerate}

\end{definition}

Following \cite{Pansu_Torino}, one defines a coarse version of $p$-energy. A \emph{kernel} on $X$ is a nonnegative bounded Lipschitz function $\psi$ on $X\times X$, which is bounded away from $0$ (resp. has support in) a bounded neighborhood of the diagonal, and satisfies
\begin{align*}
\forall x,y\in X,\quad \int_{X}\psi(z,y)\,dz =\int_{X}\psi(x,z)\,dz=1.
\end{align*}
Given a cocycle (i.e. a function $\alpha$ on $X\times X$ such that $\forall x,y,z\in X$, $\alpha(x,y)+\alpha(y,z)=\alpha(x,z)$), one defines
\begin{align*}
N^{\psi}(\alpha)=\left(\int_{X}|\alpha(x,y)|^{p}\psi(x,y)\,dx\,dy \right)^{1/p}.
\end{align*}
For $p$-PI spaces, one easily checks that, as $\psi$ varies, all these norms on the space $Z(X)$ of cocycles  are equivalent.

\begin{definition}
Coarse $p$-capacities can be defined as follows. Given a kernel $\psi$,
\begin{align*}
\mathrm{Cap}_\psi(E,F)&:=\inf\{N^\psi(\alpha)^p\,;\,\alpha_{|E\times E}=0,\,\alpha_{|F\times F}=0,\,\alpha_{|E\times F}=1\},\\
\mathrm{Cap}_\psi(E)&:=\inf\{\mathrm{Cap}_\psi(E,F)\,;\, X\setminus F \text{ is bounded}\}.
\end{align*}
\end{definition}

\subsection{QI invariance of coarse $p$-parabolicity}

\begin{definition}
Say $X$ is \emph{coarsely $p$-parabolic} if, for some kernel $\psi$, $\mathrm{Cap}_\psi(E)=0$ for every bounded set $E\subset X$.
\end{definition}

Let $f:X\to Y$ be a quasiisometry between $p$-PI spaces. One easily checks (Proposition 1.7 in \cite{Pansu_Torino}) that the map
\begin{align*}
\beta\mapsto (\beta\star\psi)\circ f
\end{align*}
is a bounded operator from $Z(Y)$ to $Z(X)$. Therefore the Banach space $Z(X)$ of cocycles (up to renorming) is a QI invariant. 

It follows that for every kernel $\psi$ and $\rho\ge 0$, there exists a kernel $\psi'$ and $\rho'\ge 0$ such that for all subsets $E$ and $F$ of $X$,
\begin{align} \label{capQI}
\mathrm{Cap}_{\psi}(E+\rho,F+\rho)\le C\,\mathrm{Cap}_{\psi'}(f(E)+\rho',f(F)+\rho').
\end{align}

\begin{proposition} \label{coarseQI}
Coarse $p$-parabolicity is a QI invariant of $p$-PI spaces.
\end{proposition}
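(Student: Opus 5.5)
The plan is to reduce everything to inequality (\ref{capQI}), applied to a quasi-inverse. Let $f:X\to Y$ be a quasiisometry between $p$-PI spaces, and assume $Y$ is coarsely $p$-parabolic for some kernel $\psi'$. We want to show that $X$ is coarsely $p$-parabolic. Let $g:Y\to X$ be a quasi-inverse of $f$; it is again a quasiisometry between $p$-PI spaces.

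The first step is to remove the dependence on the kernel. The norms $N^\psi$ on $Z(X)$ are equivalent as $\psi$ varies, and the constraints defining $\mathrm{Cap}_\psi$ do not involve $\psi$. Hence the capacities $\mathrm{Cap}_\psi(E,F)$ for different kernels are comparable up to multiplicative constants. In particular, vanishing of $\mathrm{Cap}_\psi(E)$ for all bounded $E$ does not depend on the choice of $\psi$. So we may fix any kernel $\psi$ on $X$.

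Next, fix a bounded $E\subset X$ and $\epsilon>0$. Apply (\ref{capQI}) to $f$ with this $\psi$ and $\rho=0$. This yields a kernel $\psi'$ on $Y$ and $\rho'\ge0$ with
$$\mathrm{Cap}_\psi(E,F)\le C\,\mathrm{Cap}_{\psi'}(f(E)+\rho',f(F)+\rho')$$
for all $E,F$. The set $E'=f(E)+\rho'$ is bounded in $Y$, so by parabolicity of $Y$ (kernel-independent by the first step) there is $F'$ with $Y\setminus F'$ bounded and $\mathrm{Cap}_{\psi'}(E',F')<\epsilon$. We then need $F\subset X$ with $X\setminus F$ bounded and $f(F)+\rho'\subset F'$. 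The natural choice is
$$F=\{x\in X : d(f(x),\,Y\setminus F')>\rho'\},$$
that is, points whose $\rho'$-neighbourhood in the image lies in $F'$. Since $f$ is a quasiisometry, hence coarsely proper, $X\setminus F\subset f^{-1}\big((Y\setminus F')+\rho'\big)$ is bounded.

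Capacity is monotone under shrinking the constraint sets, since fewer constraints give a smaller infimum. Therefore
$$\mathrm{Cap}_\psi(E,F)\le C\,\mathrm{Cap}_{\psi'}\big(f(E)+\rho',f(F)+\rho'\big)\le C\,\mathrm{Cap}_{\psi'}(E',F')< C\epsilon.$$
Letting $\epsilon\to0$ gives $\mathrm{Cap}_\psi(E)=0$. The reverse implication is the same argument with $g$ in place of $f$.

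The main obstacle I expect is that (\ref{capQI}) is stated with thickened sets $E+\rho$ on the left. One therefore has to be sure it applies with $\rho=0$, or else absorb the thickening: replace $E$ by $E+\rho$, which is still bounded, and $F$ by $F\setminus(\text{bounded set})$, whose complement is still bounded. This is legitimate because the definition of coarse parabolicity only involves bounded $E$ and cobounded $F$, a class stable under bounded thickenings. I would also check that monotonicity interacts correctly with the constraint $\alpha|_{E\times F}=1$ when $E$ and $F$ overlap after thickening. Since $E$ is bounded, we can shrink $F$ further to keep it disjoint from $E+\rho$.
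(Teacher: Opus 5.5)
Your proof is correct and follows the paper's argument. The paper notes that boundedness and co-boundedness are preserved by $f$ and by bounded thickenings, then lets Estimate (\ref{capQI}) transfer vanishing of capacities; your version fills in the details (kernel independence, the explicit cobounded $F$ with $f(F)+\rho'\subset F'$, and the quasi-inverse for the converse), and the thickening you worry about is harmless, since by monotonicity $\mathrm{Cap}_\psi(E,F+\rho)\le\mathrm{Cap}_\psi(E+\rho,F+\rho)$ with $F+\rho$ still cobounded.
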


\begin{proof}
Indeed, a set $E$ or $E+\rho$ is bounded (resp. the complement of a bounded set) if and only if $f(E)$, or $f(E)+\rho'$, is. Then Estimate (\ref{capQI}) transfers vanishing of capacities.
\end{proof}

\subsection{QI-invariance of coarse liminal $p$-parabolicity}

Strict $p$-para\-bolicity involves connected sets, a notion which is not directly QI-invariant. Therefore some preliminaries are needed.

\begin{notation} \label{tilde+}
Let $X$ be a metric space and $\rho>0$. For $x\in X$, let $B(x,\rho)$ denote the open ball and let
$$
\tilde B(x,\rho)~ \text{be the connected component of $x$ in }B(x,\rho).
$$
For $E\subset X$, let $E+\rho:=\bigcup_{x\in E}B(x,\rho)
$ denote the tubular $\rho$-neighborhood and
\begin{align*}
E\tilde + \rho:=\bigcup_{x\in E}\tilde B(x,\rho)
\end{align*}
the \emph{connected $\rho$-neighborhood} of $E$.
\end{notation}

\begin{definition} \label{qconn}
Let $\eta:[0,+\infty)\to [0,+\infty)$ be a nondecreasing function. A metric space $X$ is \emph{quasi-connected} with \emph{quasi-connectivity profile} $\eta$ if for every $x\in X$ and $\ell>0$, $B(x,\ell)\subset \tilde B(x,\eta(\ell))$.
\end{definition}

\begin{lemma} \label{E'}
Assume that $X$ and $Y$ are locally connected and quasi-connected. Let $f:X\to Y$ be an $(L,D)$-quasiisometry. Let $\rho\ge D$. Let $E\subset Y$ be a subset such that $E\tilde+\rho$ is connected. Then, for large enough $\rho'$, $f^{-1}(E+D)\tilde+\rho'$ is connected.
\end{lemma}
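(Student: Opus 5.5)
The plan is to translate the chain structure of $E\tilde+\rho$ back to $X$ through a quasi-inverse, and to use quasi-connectivity of $X$ to turn coarse chains into genuine connected sets. Let $E' := f^{-1}(E+D)$. Because $f$ is an $(L,D)$-quasiisometry, every $y\in Y$ lies within $D$ of $f(X)$. Hence for every $e\in E$ there is $x\in X$ with $d(f(x),e)<D$, that is, $x\in E'$. In particular $E'$ is nonempty whenever $E$ is, and $f(E')$ is $D$-dense in $E$.

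\emph{Step 1 (coarse connectivity of $E'$).} Since $E\tilde+\rho$ is connected and is the union of the connected open sets $\tilde B(e,\rho)$, $e\in E$, any two points of $E$ can be joined by a chain $e_0,\dots,e_k\in E$ with $\tilde B(e_{i},\rho)\cap\tilde B(e_{i+1},\rho)\neq\emptyset$. This is the standard chain argument: the set of points reachable by such chains is open and closed in the union. Consequently $d(e_i,e_{i+1})<2\rho$.

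Now pick $x_i\in E'$ with $d(f(x_i),e_i)<D$. Then $d(f(x_i),f(x_{i+1}))<2\rho+2D$, and therefore $d(x_i,x_{i+1})\le L(2\rho+3D)=:\ell$. Moreover, any point $x\in E'$ satisfies $d(f(x),e)<D$ for some $e\in E$, so it can serve as the endpoint of such a chain. Thus any two points of $E'$ are joined by an $\ell$-chain in $E'$.

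\emph{Step 2 (from chains to connectedness).} By quasi-connectivity (Definition \ref{qconn}), $x_{i+1}\in B(x_i,\ell)\subset\tilde B(x_i,\eta(\ell))$. Choose $\rho'>\eta(\ell)$, so that $\tilde B(x_i,\eta(\ell))\subset\tilde B(x_i,\rho')$. Then $x_{i+1}\in\tilde B(x_i,\rho')$, and the connected sets $\tilde B(x_i,\rho')$ and $\tilde B(x_{i+1},\rho')$ both contain $x_{i+1}$. Hence the union of the $\tilde B(x_i,\rho')$ along a chain is connected.

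Every point of $E'\tilde+\rho'$ lies in some $\tilde B(x,\rho')$ with $x\in E'$. Any two such balls are linked through a chain as above, so $E'\tilde+\rho'$ is a union of connected sets that are pairwise linked by connected subsets of it. It is therefore connected.

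\emph{Main obstacle.} The expected difficulty is the chain lemma in $Y$: extracting a $2\rho$-chain from the connectedness of $E\tilde+\rho$. The argument fixes $e_0$ and lets $U$ be the union of all $\tilde B(e,\rho)$ reachable from $e_0$ by overlapping chains. Both $U$ and its complement in $E\tilde+\rho$ are unions of these open sets; openness relies on local connectedness, which makes the components $\tilde B$ open. Since a ball outside $U$ cannot meet $U$, the two sets are disjoint and open, and connectedness forces the complement to be empty. Local connectedness of $X$ is used in the same way, so that the sets $\tilde B(x,\rho')$ are open and the notation is meaningful. The hypothesis $\rho\ge D$ only serves to keep the constants clean.
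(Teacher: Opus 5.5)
Your proposal is correct and follows essentially the same route as the paper. You extract a $2\rho$-chain in $E$ from the connectedness of $E\tilde+\rho$, using that the sets $\tilde B(e,\rho)$ are open by local connectedness, then lift it to a chain in $E'=f^{-1}(E+D)$ with steps bounded by $\ell=L(2\rho+3D)$, and conclude by quasi-connectivity with $\rho'\ge\eta(\ell)$; the paper does the same, with $\rho'=\eta(\ell)$.
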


\begin{proof}

Consider the equivalence relation on $E$ generated by $y\sim y'$ if $\tilde B(y,\rho)\cap \tilde B(y',\rho)\not=\emptyset$. By local connectedness, for each equivalence class $\mathcal{C}$, the set $\mathcal{C}\tilde +\rho$ is open. Distinct classes have disjoint connected $\rho$-neighborhoods $\mathcal{C}\tilde+\rho$. Since $E\tilde+\rho$ is connected, there is only one equivalence class. Therefore every two points of $E$ are joined by a finite sequence $y_j$ of points of $E$ with $d(y_j,y_{j+1})< 2\rho$.

Let $E'=f^{-1}(E+D)$. Then $f(E')\subset E+D$. Conversely, since $f(X)$ is $D$-dense, given $y\in E$, there exists $x\in X$ such that $d(f(x),y)\le D$, so $f(x)\in E+D$, $x\in E'$. So $E\subset f(E')+D$. 

Given $x,x'\in E'$, let $y,y'\in E$ be $D$-close to $f(x),f(x')$, let $y_j$ be a $2\rho$ chain in $E$ joining them, let $x_j\in E'$ be such that $d(f(x_j),y_j)\le D$. Then 
\begin{align*}
d(x_{j},x_{j+1})\le  L(D+d(f(x_{j}),f(x_{j+1})))\le L(3D+2\rho).
\end{align*}
Let $\ell=L(3D+2\rho)$ and $\rho'=\eta(\ell)$. For every $j$, $x_{j+1}\in B(x_i,\ell)\subset \tilde B(x_j,\rho')$, so $x,x'$ belong to the same connected component of $E'\tilde+\rho'$. We conclude that $E'\tilde+\rho'$ is connected.

\end{proof}

\begin{definition}
Let $\mathcal{S}_{\rho}$ denote the collection of subsets $E$ of $X$ such that the connected $\rho$-tubular neighborhood $E\tilde+\rho$ is escaping. Say $X$ is \emph{coarsely liminal $p$-parabolic} if $X$ is coarsely $p$-parabolic, and there exists a kernel $\psi$ such that for all $\rho>0$, there exists $\rho'>0$ such that $\mathrm{Cap}_\psi(E+\rho',F+\rho')=+\infty$ for all $E,F\in \mathcal{S}_{\rho}$.
\end{definition}

\begin{proposition} \label{coarsestrictQI}
Coarse liminal $p$-parabolicity is a QI invariant of locally connected, quasi-connected $p$-PI spaces.
\end{proposition}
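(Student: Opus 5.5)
Let $f:X\to Y$ be an $(L,D)$-quasiisometry between locally connected, quasi-connected $p$-PI spaces, with quasi-inverse $\bar f:Y\to X$. Assume $X$ is coarsely liminal $p$-parabolic; we show $Y$ is, and the converse is symmetric. Coarse $p$-parabolicity of $Y$ follows from Proposition \ref{coarseQI}. It remains to produce a kernel $\psi_Y$ on $Y$ such that, for each $\rho>0$, some $\rho'$ makes $\mathrm{Cap}_{\psi_Y}(E+\rho',F+\rho')=+\infty$ for all $E,F\in\mathcal{S}_\rho(Y)$.

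We argue by pulling sets back. Given $\rho$ and $E,F\in\mathcal{S}_\rho(Y)$, set $E'=f^{-1}(E+D)$ and $F'=f^{-1}(F+D)$. The first step is to show $E',F'\in\mathcal{S}_{\rho_1}(X)$ for some $\rho_1$ depending only on $\rho,L,D,\eta$. Lemma \ref{E'} gives connectedness of $E'\tilde+\rho_1$, but escaping is stronger. We run the chain argument from the proof of Lemma \ref{E'} relative to bounded sets. Let $B\subset E'\tilde+\rho_1$ be bounded. Its image $f(B)$ lies in a bounded set $B_Y$, and we enlarge $B_Y$ by a margin of order $L(\rho+D)+D$. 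Since $E\tilde+\rho$ is escaping, $(E\tilde+\rho)\setminus B_Y$ contains an unbounded connected set $U$. The points of $E$ whose $\tilde B(\cdot,\rho)$ meets $U$ are linked by $2\rho$-chains in $E$. Pulling these chains back as in Lemma \ref{E'} gives $\ell$-chains in $E'$, with $\ell=L(3D+2\rho)$, that stay far from $B$. The corresponding balls $\tilde B(x_j,\eta(\ell))$ then yield an unbounded connected subset of $(E'\tilde+\rho_1)\setminus B$, where $\rho_1=\eta(\ell)$. This requires $B$ and the margins to be chosen so that these balls avoid $B$, which holds after enlarging by $\rho_1$ in $X$ and transporting through $f$. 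This is the main obstacle: to make the escaping property survive coarsening, with all constants independent of $E$ and $B$.

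Next we apply the hypothesis in $X$. For $\rho_1$ there is $\rho_1'$ with $\mathrm{Cap}_{\psi_X}(E'+\rho_1',F'+\rho_1')=+\infty$. Now we invoke estimate (\ref{capQI}) with $\psi=\psi_X$ and $\rho=\rho_1'$. It supplies a kernel $\psi'$ on $Y$ and $\rho''$ with
$$+\infty=\mathrm{Cap}_{\psi_X}(E'+\rho_1',F'+\rho_1')\le C\,\mathrm{Cap}_{\psi'}(f(E')+\rho'',f(F')+\rho'').$$
Since $f(E')\subset E+D$, we get $f(E')+\rho''\subset E+(D+\rho'')$. Capacity is monotone under enlarging both sets, since the constraints only grow. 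Hence $\mathrm{Cap}_{\psi'}(E+\rho',F+\rho')=+\infty$ with $\rho'=D+\rho''$.

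One subtlety is that $\psi'$ may depend on $\rho_1'$, hence on $\rho$, whereas the definition requires a single kernel. We handle this with the equivalence of the norms $N^\psi$ for varying kernels on $p$-PI spaces. Fix any kernel $\psi_Y$ on $Y$. Then $N^{\psi'}\le C' N^{\psi_Y}$, so infinite $\psi'$-capacity implies infinite $\psi_Y$-capacity. The hypothesis that $E'\cap F'$ is compatible with the constraints is automatic: if the admissible set of cocycles is empty, the capacity is $+\infty$ in both spaces, consistently. This completes the plan.
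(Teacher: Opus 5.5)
Your proposal is correct and follows the paper's proof. Like the paper, you pull $E,F\in\mathcal{S}_\rho(Y)$ back to $E'=f^{-1}(E+D)$ and $F'=f^{-1}(F+D)$, apply the hypothesis in $X$, transfer the infinite capacity through estimate (\ref{capQI}), and conclude using $f(E')\subset E+D$ and monotonicity of capacity. Your extra points only fill in details the paper passes over, without changing the route: Lemma \ref{E'} gives only connectedness, so escaping needs the chain argument run outside a large bounded set; coarse $p$-parabolicity of $Y$ comes from Proposition \ref{coarseQI}; and equivalence of the norms $N^\psi$ makes the kernel independent of $\rho$.
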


\begin{proof}

Let $f:X\to Y$ be an $(L,D)$-quasiisometry. 
Let $E,F\in \mathcal{S}_{\rho}(Y)$. Let $E'=f^{-1}(E+D)$, $F'=f^{-1}(F+D)$. According to Lemma \ref{E'}, $E',F'\in \mathcal{S}_{\rho_1}(X)$ for some $\rho_1$ depending only on $L,D,\rho$ and the quasi-connectivity profile $\eta$. 
Assume that $X$ is coarsely liminal $p$-parabolic. There is a $\rho_2\ge \rho_1$ such that $\mathrm{Cap}_{\psi}(E'+\rho_2,F'+\rho_2)=+\infty$. 
Estimate (\ref{capQI}) implies that there exists a suitable kernel $\psi'$ and $\rho_3>0$ such that
\begin{align*}
\mathrm{Cap}_{\psi}(E'+\rho_2,F'+\rho_2)
&\le C\,\mathrm{Cap}_{\psi'}(f(E')+\rho_3,f(F')+\rho_3)\\
&\le C\,\mathrm{Cap}_{\psi'}(E+D+\rho_3,F+D+\rho_3).
\end{align*}
Then, for $\rho'=D+\rho_3$,
$$
\mathrm{Cap}_{\psi'}(E+\rho',F+\rho')=+\infty.
$$
This shows that $Y$ is coarsely liminal $p$-parabolic. 

\end{proof}

\subsection{Comparing capacities and coarse capacities}

Kernels are also used to regularize locally integrable functions
\begin{align*}
(u\star\psi)(x):=\int_{X}u(z)\psi(x,z)\,dz,
\end{align*}
and cocycles,
\begin{align*}
(\alpha\star\psi)(x,y):=\int_{X\times X}\alpha(z,w)\psi(x,z)\psi(y,w)\,dz\,dw.
\end{align*}

\begin{lemma} \label{cap<coarse}
Let $X$ be a $p$-PI space. For every $\rho>0$, there exists a kernel $\psi$ and a constant $C$ such that, for all subsets $E,F\subset X$,
\begin{align*}
\mathrm{Cap}_{p}(E,F)\le C\,\mathrm{Cap}_{\psi}(E+\rho,F+\rho).
\end{align*}
\end{lemma}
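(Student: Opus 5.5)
Given a cocycle $\alpha$ admissible for $\mathrm{Cap}_\psi(E+\rho,F+\rho)$, the plan is to build a function $u$ with upper gradient $g$ such that $u\ge 1$ on $E$, $u\le 0$ on $F$, and $\int g^p\le C\,N^\psi(\alpha)^p$. Then take the infimum. Since $X$ is connected, a cocycle is of the form $\alpha(x,y)=v(y)-v(x)$ for some function $v$ on $X$, unique up to an additive constant. The admissibility conditions say that $v$ takes a constant value $a$ on $E+\rho$ and $b=a+1$ on $F+\rho$. Normalizing, we may take $v=1$ on $E+\rho$ and $v=0$ on $F+\rho$, after replacing $v$ by $1-(v-a)$, which does not change $|\alpha|$. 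The candidate is $u:=v\star\psi_0$, where $\psi_0$ is a Lipschitz kernel supported in $\{d(x,z)<\rho\}$, e.g. a normalized bump $\psi_0(x,z)=\phi(d(x,z))/\int\phi(d(x,w))\,dw$. The normalizing integral is bounded below by $m(\rho/2)$ and the numerator is Lipschitz, so $\psi_0$ is Lipschitz.

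First, the boundary values. For $x\in E$, $\psi_0(x,\cdot)$ is supported in $B(x,\rho)\subset E+\rho$, where $v\equiv 1$, so $u(x)=1$. Similarly $u=0$ on $F$. Second, the upper gradient. For $x,y$ with $d(x,y)<\rho$, use $\int\psi_0(x,z)\,dz=\int\psi_0(y,z)\,dz=1$ to write
$$u(y)-u(x)=\int (v(z)-v(x))(\psi_0(y,z)-\psi_0(x,z))\,dz.$$
Lipschitz continuity of $\psi_0$ with constant $L$ then gives
$$|u(y)-u(x)|\le L\,d(x,y)\int_{B(x,2\rho)}|\alpha(x,z)|\,dz.$$
So $u$ is locally Lipschitz with local Lipschitz constant at $x$ bounded by $g(x):=L\int_{B(x,2\rho)}|\alpha(x,z)|\,dz$, and this serves as an upper gradient. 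The local Lipschitz constant is upper semicontinuous after a harmless enlargement of the ball to $3\rho$.

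Third, the integral bound. H\"older's inequality and the upper volume bound $M(2\rho)$ give
$$\int g^p\le L^pM(2\rho)^{p-1}\int\int_{d(x,z)<2\rho}|\alpha(x,z)|^p\,dz\,dx.$$
The right-hand side is controlled by $N^{\psi}(\alpha)^p$ for a kernel $\psi$ bounded away from $0$ on the $2\rho$-neighborhood of the diagonal. Equivalence of the norms $N^\psi$ noted earlier lets us use any kernel, adjusting the constant. This proves the lemma with $C=C(\rho,L,M,m,p)$.

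The main obstacle is measurability and integrability. The cocycle $v$ is a priori just measurable, and $\alpha$ is only defined almost everywhere, so the pointwise upper-gradient claim needs care. I would handle it by working with representatives, using that $u$ is genuinely Lipschitz whenever $v\in L^1_{loc}$, or truncating $v$ to $[0,1]$, which only decreases $|\alpha|$. The Poincar\'e inequality is not needed; only the ball-measure bounds $m$ and $M$ are used.
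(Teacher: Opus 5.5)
Your proposal is correct and follows essentially the same route as the paper. Both mollify the function underlying the cocycle with a Lipschitz kernel of width $\rho$, so that the values on $E$ and $F$ are inherited from $E+\rho$ and $F+\rho$; both bound the local Lipschitz constant by the local $L^1$ oscillation, then use H\"older with the upper volume bound and dominate by $N^\psi$ for a kernel bounded below near the diagonal. The details the paper leaves implicit (writing the cocycle as a coboundary, normalizing, truncating $v$ to $[0,1]$, assuming $2\rho<R$ by monotonicity of $\mathrm{Cap}_\psi$ in $E,F$) are handled well. Your remark about upper semicontinuity is unnecessary and not quite right: $\mathrm{Lip}\,u$ is Borel because $u$ is continuous, it is an upper gradient because $u$ is locally Lipschitz, and it is dominated pointwise by your $g$.
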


\begin{proof}
Start with a kernel $\psi_0$ of width $\rho$. Let $u$ be a locally integrable function on $X$, let $\alpha=(du)\star\psi$, where $du$ denotes the cocycle $du(x,y):=u(x)-u(y)$. Since $\psi_0$ is Lipschitz, so is $u\star\psi_0$, its local Lipschitz constant satisfies 
\begin{align*}
g(x):=Lip_x(u\star\psi)&\le\int_{B(x,\rho)}|u(z)-u(x)|(Lip_x\psi_0)(x,z)\,dz,
\end{align*}
where $\rho$ is the width of the support of $\psi_0$. $g$ is an upper gradient for $u\star\psi_0$ and satisfies, by H\"older's inequality,
\begin{align*}
\left(\int_{X}g^p\right)^{1/p}\le C\,\left(\int_{\{d(x,z)\le\rho\}}|u(x)-u(z)|^p \,dx\,dz \right)^{1/p}\le C\,N^{\psi}(du),
\end{align*}
for another kernel $\psi$ (Lemma 1.10 in \cite{Pansu_Torino}). 

Assume in addition that $u\ge 1$ on $E+\rho$ and $u\le 0$ on $F+\rho$. Then $u\star\psi_0\ge 1$ on $E$ and $u\star\psi_0\le 0$ on $F$. This shows that
\begin{align*}
\mathrm{Cap}_{p}(E,F)\le C\,\mathrm{Cap}_{\psi}(E+\rho,F+\rho).
\end{align*}

\end{proof}

\begin{lemma} \label{coarse<cap}
Let $X$ be a $p$-PI space. There exists a kernel $\psi$ and a constant $C$ such that, for every subset $E\subset X$,
\begin{align*}
\mathrm{Cap}_\psi(E)\le C\,\mathrm{Cap}_p(E).
\end{align*}
\end{lemma}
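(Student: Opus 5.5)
Starting from a nearly optimal test function for $\mathrm{Cap}_p(E)$, we will produce a competitor for $\mathrm{Cap}_\psi(E)$, using a kernel $\psi$ supported in a $\rho$-neighborhood of the diagonal with $\rho<R/\sigma$. If $\mathrm{Cap}_p(E)=+\infty$ there is nothing to prove. Otherwise, fix $F$ with $X\setminus F$ bounded and $u\in L^1_{loc}$ with upper gradient $g$ such that $u\ge 1$ on $E$, $u\le 0$ on $F$ and $\int_X g^p\le \mathrm{Cap}_p(E,F)+\eps$. First we truncate: replacing $u$ by $\min(\max(u,0),1)$ keeps $g$ as an upper gradient and keeps the boundary conditions. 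So we may assume $0\le u\le 1$.

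The candidate cocycle is $\alpha=du$, i.e. $\alpha(x,y)=u(x)-u(y)$. It is a cocycle, but it satisfies the constraints only on $E\times E$, $E\times F$ and $F\times F$ when $u$ takes exactly the value $1$ on $E$ and $0$ on $F$. This is why we truncate, so that $u=1$ on $E$ and $u=0$ on $F$. Then $du$ vanishes on $E\times E$ and $F\times F$ and equals $1$ on $E\times F$, so $\mathrm{Cap}_\psi(E,F)\le N^\psi(du)^p$. The set $X\setminus F$ is still bounded, so taking the infimum over $F$ gives $\mathrm{Cap}_\psi(E)\le N^\psi(du)^p$. (Since the constraint is $\alpha|_{E\times F}=1$ rather than $\ge 1$, we use exact values after truncation.)

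It remains to prove the bound
\begin{align*}
N^\psi(du)^p=\int_{X\times X}|u(x)-u(y)|^p\psi(x,y)\,dx\,dy\le C\int_X g^p .
\end{align*}
This is the main step. Since $\psi$ is bounded and supported in $\{d(x,y)<\rho\}$, it suffices to bound $\int\int_{d(x,y)<\rho}|u(x)-u(y)|^p$. To do this, cover $X$ by a maximal $\rho$-separated net $\{x_i\}$. The uniform lower bound $m$ and upper bound $M$ on ball measures give bounded overlap of the balls $B(x_i,2\sigma\rho)$, with multiplicity depending only on $m(\rho/2)$ and $M(4\sigma\rho)$. Each pair $(x,y)$ with $d(x,y)<\rho$ lies in some $B(x_i,2\rho)^2$. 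On such a ball, write $|u(x)-u(y)|^p\le 2^{p-1}(|u(x)-u_B|^p+|u(y)-u_B|^p)$ and integrate. This gives
\begin{align*}
\int\int_{B\times B}|u(x)-u(y)|^p\le 2^pM(2\rho)\int_B|u-u_B|^p\le 2^pM(2\rho)C_{p\mathrm{P}}(2\rho)\int_{B(x_i,2\sigma\rho)}g^p
\end{align*}
by the $(p,p)$-Poincar\'e inequality, with $\rho$ small enough that $2\rho<R/\sigma$. Summing over $i$ with bounded overlap yields the bound, with $C$ depending only on $p$, $\rho$, $\|\psi\|_\infty$ and the profiles.

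Letting $\eps\to0$ and taking the infimum over $F$ and $u$ concludes the proof. The only delicate points are that the covering multiplicity depends only on the profiles, and that $u\in L^1_{loc}$ makes $du$ measurable, so that the Poincar\'e inequality may be applied; both are standard.
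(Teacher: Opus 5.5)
Your proof is correct and follows the paper's route: take $du$ as the competitor cocycle for a kernel of small width, and bound $\iint_{d(x,y)<\rho}|u(x)-u(y)|^p$ by $C\int_X g^p$ using the $(p,p)$-Poincar\'e inequality at a fixed scale together with the ball-measure bounds. Your truncation (needed because the coarse condenser requires $\alpha|_{E\times F}=1$ exactly) and your bounded-overlap covering make rigorous two points that the paper's pointwise-Poincar\'e-then-Fubini sketch passes over; just take $\rho<R/(4\sigma)$ so that the bound $M(4\sigma\rho)$ you use for the overlap multiplicity is actually available.
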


\begin{proof}
Let $r=R/\sigma$. Let $u$ be a locally integrable boundedly supported competitor for the left hand side. Let $g$ be an upper gradient for $u$. Poincar\'e's $(p,p)$-inequality
\begin{align*}
\left(\int_{B(x,r)} |u-u_B|^p \right)^{1/p} 
&\le C_{p{\mathrm P}}(r) \left( \int_{B(x,\sigma r)} g^p \right)^{1/p}
\end{align*}
shows that, for a suitable constant $C$,
\begin{align*}
\int_{d(x,y)\le r}|u(x)-u(z)|^p
&=\int_{X}\left(\int_{B(x,r)}|u(x)-u(z)|^p\,dz\right)\,dx\\
&\le \int_{X}\left(\int_{B(x,r)}g(z)^p\,dz\right)\,dx\\
&=\int_{X}g(z)^p\mu(B(z,r))\,dz
\le C\,\int g^p.
\end{align*}
The left hand side can be replaced with $c\,N^\psi(du)$ for some kernel $\psi$ and $c>0$.
This shows that
\begin{align*}
\mathrm{Cap}_\psi(E)\le C\,\mathrm{Cap}_p(E).
\end{align*}

\end{proof}

\subsection{QI invariance of $p$-parabolicity}

When $p=2$, $2$-parabolicity is equivalent to vanishing of reduced $L^2$ $1$-cohomology. In the case of Riemannian manifolds, the QI invariance of this property is due to M. Kanai, \cite{Kanai}.

In \cite{Holo}, I. Holopainen proves the QI invariance of Property $D_p$, i.e. vanishing of the space of $p$-harmonic functions with finite $p$-Dirichlet integral. Equivalently, of the reduced $L^p$ $1$-cohomology. When $p\not=2$, it is unclear whether this is equivalent to $p$-parabolicity (an exact $L^p$-cohomological characterization of $p$-parabolicity appears in \cite{GoldTroyDuality}, but its QI invariance does not follow from existing $L^p$-cohomological results). Therefore we provide a direct proof, valid for all $p\ge 1$.

\begin{proposition}
\label{QIinv1}
Let $p\ge 1$. Among $p$-PI spaces, $p$-parabolicity is equivalent to coarse $p$-parabolicity, hence it is a QI invariant property by Proposition \ref{coarseQI}.
\end{proposition}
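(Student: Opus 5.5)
The plan is to prove the two implications separately, each from one of the comparison lemmas just established, and then to invoke Proposition \ref{coarseQI} for the QI invariance.

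\emph{$p$-parabolic $\implies$ coarsely $p$-parabolic.} This direction is immediate from Lemma \ref{coarse<cap}. It provides a single kernel $\psi$ and a constant $C$ with $\mathrm{Cap}_\psi(E)\le C\,\mathrm{Cap}_p(E)$ for every subset $E$. If every bounded set has vanishing $p$-capacity, then every bounded set has vanishing $\psi$-capacity, which is exactly coarse $p$-parabolicity for this $\psi$.

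One point in that lemma should be checked. Its proof starts from competitors $u$ with $u\ge 1$ on $E$ and $u\le 0$ on $F$, and the cocycle $du$ has to be a competitor for $\mathrm{Cap}_\psi(E,F)$. The definition of coarse capacity, however, asks for $\alpha=1$ on $E\times F$ and $\alpha=0$ on $E\times E$ and on $F\times F$. So I would first truncate $u$ to $\min(\max(u,0),1)$, which does not increase the upper gradient. The truncated $u$ equals $1$ on $E$ and $0$ on $F$, so $du$ has exactly the required values.

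\emph{Coarsely $p$-parabolic $\implies$ $p$-parabolic.} Here I would use Lemma \ref{cap<coarse}, but it involves thickened sets, and I must make sure the thickening does not destroy vanishing. Fix $\rho>0$ and take the kernel $\psi_\rho$ and constant $C$ given by Lemma \ref{cap<coarse}. Since all the norms $N^\psi$ are equivalent as $\psi$ varies, vanishing of coarse capacity for one kernel implies it for every kernel, in particular for $\psi_\rho$.

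Let $E$ be bounded, and let $F$ be any set with bounded complement. Then $E+\rho$ is bounded, so coarse parabolicity gives, for every $\varepsilon>0$, a set $F'$ with bounded complement such that $\mathrm{Cap}_{\psi_\rho}(E+\rho,F')<\varepsilon$. I need $F'$ to be of the form $F+\rho$. To arrange this, I would set $F=\{x : d(x,X\setminus F')>\rho\}$. Its complement is contained in $(X\setminus F')+\rho$, so it is bounded, and $F+\rho\subset F'$.

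Coarse capacity is monotone: shrinking the sets weakens the constraints on $\alpha$. Hence
\begin{align*}
\mathrm{Cap}_p(E)\le\mathrm{Cap}_p(E,F)\le C\,\mathrm{Cap}_{\psi_\rho}(E+\rho,F+\rho)\le C\,\mathrm{Cap}_{\psi_\rho}(E+\rho,F')<C\varepsilon .
\end{align*}
Letting $\varepsilon\to 0$ gives $\mathrm{Cap}_p(E)=0$.

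\emph{Main obstacle.} The delicate step is the use of Lemma \ref{cap<coarse}, whose proof passes between cocycles and functions. A general competitor $\alpha$ for coarse capacity is a cocycle, not obviously of the form $du$. I would handle this by noting that a cocycle satisfies $\alpha(x,y)=\alpha(x,x_0)-\alpha(y,x_0)$, so $\alpha=du$ with $u=\alpha(\cdot,x_0)$. The boundary conditions then read $u-u(x_0)$ constant on $E$ and on $F$, with a jump of $1$ between them. After normalizing the additive constant, $u$ equals $1$ on $E+\rho$ and $0$ on $F+\rho$, and Lemma \ref{cap<coarse} applies directly.

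Finally, the QI invariance follows: $p$-parabolicity coincides with coarse $p$-parabolicity on $p$-PI spaces, and coarse $p$-parabolicity is a QI invariant by Proposition \ref{coarseQI}.
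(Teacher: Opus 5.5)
Your proof is correct and follows the paper's argument: Lemma \ref{coarse<cap} gives $p$-parabolic $\Rightarrow$ coarsely $p$-parabolic, Lemma \ref{cap<coarse} gives the converse, and Proposition \ref{coarseQI} then yields QI invariance. Your added details (kernel-independence of vanishing coarse capacity, and shrinking $F'$ to $F=\{x : d(x,X\setminus F')>\rho\}$ so that $F+\rho\subset F'$) supply a step that the paper's remark ``$X\setminus(F+\rho)$ is bounded'' glosses over, since that remark runs in the wrong direction; the only slip is that $X\setminus F=\{x : d(x,X\setminus F')\le\rho\}$ lies in $(X\setminus F')+2\rho$ rather than $(X\setminus F')+\rho$, which is still bounded.
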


\begin{pf}

Assume that $X$ is coarsely $p$-parabolic.
According to Lemma \ref{cap<coarse}, for a suitable kernel $\psi$ and radius $\rho$, for every subsets $E,F\subset X$,
\begin{align*}
\mathrm{Cap}_{p}(E,F)\le C\,\mathrm{Cap}_{\psi}(E+\rho,F+\rho).
\end{align*}
If $X\setminus F$ is bounded, so is $X\setminus (F+\rho)$, so
\begin{align*}
\mathrm{Cap}_{p}(E)\le C\,\mathrm{Cap}_{\psi}(E+\rho).
\end{align*}
If $E$ is bounded, so is $E+\rho$, so both capacities vanish. Thus $X$ is $p$-parabolic.

Conversely, assume that $X$ is $p$-parabolic. According to Lemma \ref{coarse<cap}, there exists a kernel $\psi$ such that for every subset $E\subset X$,
\begin{align*}
\mathrm{Cap}_\psi(E)\le C\,\mathrm{Cap}_p(E).
\end{align*}
If $E$ is bounded, $\mathrm{Cap}_p(E)=0$ so $\mathrm{Cap}_\psi(E)=0$. One concludes that $X$ is coarsely $p$-parabolic.

\end{pf}

\subsection{Thickening condensers}

Classically (see for instance \cite{Ferrand_GeoDed} for Riemannian manifolds, and references therein), capacities can be estimated from below as follows: first show that in the definition, $L^1_{loc}$ competitors can be replaced with monotone continuous functions. Then prove an apriori modulus of continuity for monotone continuous functions with $L^Q$ upper gradient. This leads to the following thickening procedure: condensers with very low capacity can be thickened without substantially changing the capacity. This is possible in general ul$Q$bg spaces, for exponents $p\ge Q$. The result is possibly non sharp, since, for $n$-dimensional Riemannian manifolds, the estimate holds as soon as $p>n-1$.

\begin{definition}
Let $X$ be a metric space. Say that $X$ is \emph{quasi-convex} if there exists a constant $C_{qc}$ such that all pairs of points at distance $\ell$ are joined by curves of length $\le C_{qc}\ell$.
\end{definition}
Quasi-convexity implies quasi-connectivity with linear profile.

\begin{proposition} \label{thick}
There exist constants $C$ and $\kappa$ depending only on the ul$Q$bg data, on the quasi-convexity rate $C_{qc}$, on $p\ge Q>1$ and on $r>0$ such that, for every quasi-convex and ul$Q$bg space $X$ and every closed connected subsets $E,F\subset X$ of diameters $\ge 2r$ such that $\mathrm{Cap}_p(E,F)<\kappa$,
\begin{align*}
\mathrm{Cap}_p(E,F)\le \mathrm{Cap}_p(E+r,F+r)\le C\,\mathrm{Cap}_p(E,F).
\end{align*}
\end{proposition}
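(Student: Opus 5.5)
The left inequality is immediate: a competitor for $(E+r,F+r)$ is also a competitor for $(E,F)$, since $E\subset E+r$ and $F\subset F+r$. For the right inequality the plan is the classical Ferrand thickening. Start with a competitor $u$ for $\mathrm{Cap}_p(E,F)$ with upper gradient $g$ and $\int g^p\le 2\,\mathrm{Cap}_p(E,F)$. Then build from $u$ a new function $v$ with $v\ge 1$ on $E+r$, $v\le 0$ on $F+r$ and $\int g_v^p\le C\int g^p$.

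\emph{Step 1 (truncation and continuity).} Replace $u$ by $\min(\max(u,0),1)$; this does not increase the energy. On a ul$Q$bg space, Lipschitz functions are dense in the Newtonian space $N^{1,p}$ (Poincar\'e plus doubling at scales $<R$). So we may assume $u$ is continuous, at the cost of a factor $1+\epsilon$ and after shrinking $E,F$ to $u\ge 1-\epsilon$, $u\le \epsilon$ up to rescaling.

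\emph{Step 2 (local oscillation estimate).} For $p\ge Q$ I would prove a Morrey/Gehring-type estimate at scale $r$. Take $x$ with $d(x,E)<r$. Since $E$ is connected with diameter $\ge 2r$, the ball $B(x,2r)$ meets $E$ in a continuum of diameter $\gtrsim r$ (use quasi-convexity to stay in a ball of comparable size). Along this continuum $u\ge 1$. By the Poincar\'e inequality and the standard capacity lower bound for continua in Ahlfors $Q$-regular spaces, $\mathrm{Cap}_p$ of (continuum of diameter $\sim r$, complement of $B(x,Cr)$) is bounded below by $c(r)>0$ when $p\ge Q$, using the ul$Q$bg data at the fixed scale $r<R$. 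Then apply the telescoping/chaining argument with the $(1,Q)$-Poincar\'e inequality (hence $(1,p)$ by H\"older) on balls of radius $\le Cr$. This gives
\begin{align*}
|u(y)-u(z)|\le C_1 \left(\int_{B(x,C_2 r)}g^p\right)^{1/p}\quad\text{for } y,z\in B(x,2r),
\end{align*}
where $C_2$ comes from quasi-convexity. The delicate point is that for $p=Q$ there is no Morrey embedding. Here one has to use the continuum, i.e.\ Loewner-type estimates: the set $\{u\le 1/2\}\cap B(x,r)$ has small capacity relative to the continuum $E\cap B(x,2r)$ unless $\int_{B(x,C_2r)}g^p$ is $\gtrsim 1$. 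I expect this to be the main obstacle. I would handle it with the Heinonen--Koskela result that Ahlfors regular $Q$-Poincar\'e spaces are locally $Q$-Loewner, applied at scales $\le r$, which are controlled by $R$.

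\emph{Step 3 (conclusion).} Choose $\kappa$ so that $C_1(2\kappa)^{1/p}\le 1/4$. Then $u\ge 3/4$ on $E+r$ and $u\le 1/4$ on $F+r$. Set $v=2u-1/2$. It satisfies $v\ge 1$ on $E+r$, $v\le 0$ on $F+r$, and $\int g_v^p=2^p\int g^p\le 2^{p+1}\mathrm{Cap}_p(E,F)$. This yields the claim with $C=2^{p+1}$ (times $1+\epsilon$). The constants depend only on the ul$Q$bg data, $C_{qc}$, $p$ and $r$, as required; we need $r<R/\sigma$ after scaling, and larger $r$ is handled by chaining finitely many balls.
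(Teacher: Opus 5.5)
Your argument has a genuine gap in Step 2 at $p=Q$. This is exactly the exponent the paper needs, since conformal parabolicity is $Q$-parabolicity.

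The bound $|u(y)-u(z)|\le C_1(\int_{B(x,C_2r)}g^p)^{1/p}$ is false for a general near-optimal competitor, and here is a concrete counterexample. Take any competitor $0\le u\le 1$, a point $y\in(E+r)\setminus(E\cup F)$, and $s>0$ with $B(y,s)\cap(E\cup F)=\emptyset$. Let $w$ equal $1$ on $B(y,\delta)$, equal $0$ off $B(y,s)$, and equal $\log(s/d(y,\cdot))/\log(s/\delta)$ in between. Ahlfors regularity gives $\int g_w^Q\lesssim(\log(s/\delta))^{1-Q}$, which tends to $0$ as $\delta\to 0$. Then $u(1-w)$ is still a competitor for $(E,F)$, with upper gradient $g+g_w$, the same product trick as in Lemma \ref{small}. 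Its energy exceeds that of $u$ by an arbitrarily small amount, yet it vanishes at $y$, within distance $r$ of $E$ where it equals $1$. So no choice of $\kappa$ forces $u\ge 3/4$ on $E+r$.

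Neither of your proposed fixes rescues this. The chaining argument diverges logarithmically at $p=Q$. The Loewner property bounds capacity from below only between continua of comparable size. Here $\{u\le 1/2\}\cap B(x,r)$ may be just the tiny ball $B(y,\delta)$, and its capacity relative to $E\cap B(x,2r)$ tends to $0$.

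The missing idea is monotonization. The paper first applies Lebesgue's straightening lemma (Proposition 3.14 in \cite{FLDNGOP}). This restricts attention to continuous competitors that are monotone (obey the maximum and minimum principles) on $X\setminus(E\cup F)$, without increasing energy, and it kills such spikes. Because $E$ and $F$ are connected of diameter $>2r$, such a function is monotone on every ball $B(x,r)$ (Lemma 3.4 there). Monotone functions satisfy the Gehring-type bound $\mathrm{osc}_{B(x,r)}u\le C(\int_{B(x,Cr)}g^Q)^{1/Q}$ (Corollary 3.11 there), and H\"older upgrades this to $p\ge Q$.

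Once $u$ is monotone, your Loewner idea would also work. The component of $\{u<1/2\}$ through $y$ then cannot be relatively compact in $X\setminus(E\cup F)$, so it leaves $B(y,r)$ or meets $F$. Either way it yields a continuum of diameter $\gtrsim r$.

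The straightening lemma also replaces your Step 1. Lipschitz approximation in $N^{1,p}$ does not preserve the pointwise constraints $u\ge 1$ on $E$ and $u\le 0$ on $F$, and shrinking $E,F$ changes the condenser. For $p>Q$ your Morrey argument is fine as it stands. Step 3 (choice of $\kappa$, affine rescaling, iteration via quasi-convexity for large $r$) matches the paper.
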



\begin{proof}

According to Corollary 3.11 in \cite{FLDNGOP}, there exists $r>0$, depending on the ul$Q$bg data only, such that monotone continuous functions satisfy a modulus of continuity of the form
\begin{align*}
\forall x\in X,\quad osc_{B(x,r)}u \le C\,\left(\int_{B(x,Cr)}g^Q\right)^{1/Q}.
\end{align*}
If $p\ge Q$, H\"older's inequality yields
\begin{align*}
\forall x\in X,\quad osc_{B(x,r)}u \le C\,r^{(Q-p)/p}\left(\int_{B(x,Cr)}g^p\right)^{1/p},
\end{align*}
where the power of $r$ arises from the measure of $r$-balls.

Lebesgue's straightening Lemma (Proposition 3.14  in \cite{FLDNGOP}) allows to use only continuous functions which are monotone on $X\setminus(E\cup F)$ as competitors in the definition of capacity. Let $x\in E$. Assuming that the diameters of the connected closed sets $E$ and $F$ are $>2r$, a continuous function which is constant on $E$ and $F$ and monotone on $B(x,r)\setminus(E\cup F)$ is monotone on $B(x,r)$ (Lemma 3.4 in \cite{FLDNGOP}).

Let $u$ be a monotone continuous function such that $u(E)\ge 1$ and $u(F)\le 0$. If the right hand side is $<\frac{1}{3}$, then $v=3u-1$ satisfies $v\ge 1$ (resp. $v\le 0$) on all $r$-balls centered on $E$ (resp. on $F$), hence on $E+r$ (resp. on $F+r$). This shows that if $\mathrm{Cap}_p(E+r,F+r)$ is small enough, then
\begin{align*}
\mathrm{Cap}_p(E+r,F+r)\le C\,\mathrm{Cap}_p(E,F),
\end{align*}
for a constant $C$ depending on ul$Q$bg data only.  When $X$ is quasiconvex, iterating the estimate allows to extend it to arbitrary large values of $r$.
\end{proof}

\subsection{QI invariance of liminal $p$-parabolicity}

\begin{lemma} \label{small}
Let $X$ be a strictly $p$-parabolic space. Let $r>0$. Then there exist pairs of escaping subsets $E,F\subset X$ with arbitrarily small $p$-capacity.
\end{lemma}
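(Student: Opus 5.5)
Start from the definition. Since $X$ is strictly $p$-parabolic, there is a pair of escaping subsets $E_0,F_0\subset X$ with $\mathrm{Cap}_p(E_0,F_0)=A<+\infty$. The plan is to shrink the capacity by removing bounded pieces from $E_0$ and $F_0$, using $p$-parabolicity to cut off a large ball cheaply. Presumably the role of $r$ in the statement is that later one wants the pieces to have diameter $\ge 2r$ (as in Proposition \ref{thick}), which escaping sets automatically provide.

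Fix $\epsilon>0$ and an admissible $u$ for $(E_0,F_0)$ with upper gradient $g$ such that $\int g^p\le A+\epsilon$. Truncating to $\min(\max(u,0),1)$ does not increase the energy, so assume $0\le u\le 1$. Fix a base point $o$. Since $\int_X g^p<\infty$, choose a radius $R$ with $\int_{X\setminus B(o,R)} g^p<\epsilon$.

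Next use $p$-parabolicity. The ball $B(o,R)$ is bounded, so it has zero $p$-capacity. Hence there is $v$ with upper gradient $h$ such that $v\ge 1$ on $B(o,R)$, $v\le 0$ outside a bounded set, and $\int h^p<\epsilon$; truncate so that $0\le v\le 1$. Let $B$ be a large ball containing the support of $v$, and set $w=u(1-v)$. On $B(o,R)$ we have $w=0$. Off $\mathrm{supp}\,v$ we have $w=u$. An upper gradient of $w$ is $g(1-v)+h u\le g\,\mathbbm{1}_{X\setminus B(o,R)}+h$. Therefore
\begin{align*}
\left(\int g'^p\right)^{1/p}\le \epsilon^{1/p}+\epsilon^{1/p}.
\end{align*}

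Now define $E=E_0\setminus B$ and $F=F_0\setminus B$. On $E$ we have $w=u\ge1$, and on $F$ we have $w=u\le0$, so $\mathrm{Cap}_p(E,F)\le 2^p\epsilon$. To keep the sets escaping, use the definition: $E_0\setminus (B\cap E_0)$ contains an unbounded connected subset. So replace $E$ by that unbounded connected subset $E'$ (and similarly $F'$), which lowers capacity further by monotonicity. One checks that an unbounded connected set is itself escaping. This is the one delicate point, since removing a bounded piece from a connected unbounded set might leave only bounded components. I expect the main obstacle here. The cleanest fix is to apply the escaping property of $E_0$ to $B\cap E_0$ and then to $(B'\cap E_0)$ for increasing balls $B'$, and take $E'$ to be the union of the resulting nested unbounded connected pieces, arranged so that each $E'\setminus B'$ still contains an unbounded connected set.

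Alternatively, if one only needs escaping in the sense that is inherited, note that $E'':=E_0\setminus B$ is itself escaping. Indeed, for any bounded $B''$, the set $E''\setminus B''=E_0\setminus(B\cup B'')$ contains an unbounded connected subset by the escaping property of $E_0$. This avoids the issue entirely, so I would take $E=E_0\setminus B$, $F=F_0\setminus B$. Letting $\epsilon\to0$ gives arbitrarily small capacity. Since these sets are unbounded and connected-containing, they also have diameter $\ge 2r$.
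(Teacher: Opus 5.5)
Your proof is correct and is essentially the paper's argument: you cut $u$ off with a cheap competitor $v$ for a large ball, supplied by $p$-parabolicity, set $w=u(1-v)$, and keep $E_0\setminus B$, $F_0\setminus B$ (the paper's $E\cap F'$, $F\cap F'$), and you correctly check in your last paragraph that these are escaping. Your aside that an unbounded connected set is automatically escaping is false (a fan of segments of lengths $1,2,3,\dots$ issuing from one point is a counterexample), but your final choice does not use it; and your weighted upper gradient $g(1-v)+hu$ is what actually justifies the localized bound $\int_X(g+g')^p\le 2^p(\int_{X\setminus E'}g^p+\int_X g'^p)$, which does not follow literally from the paper's plain upper gradient $g+g'$.
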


\begin{proof}

By assumption, there exist escaping subsets $E,F\subset X$ such that $Cap_p(E,F)<+\infty$. Fix $\epsilon>0$. Let $u$ be a competitor for $Cap_p(E,F)$ with upper gradient $g$ such that $\int g^p<+\infty$. We can assume that $0\le u \le 1$. Let $E'$ be a bounded set such that
\begin{align*}
\int_{X\setminus E'}g^p<\epsilon.
\end{align*}
Since $X$ is $p$-parabolic, there exists a set $F'$ with bounded complement such that $\mathrm{Cap}_p(E',F')<\epsilon$. Let $u'$ be a competitor for this condenser, with upper gradient $g'$ such that $\int g'^p<\epsilon$. One can assume that $0\le u' \le 1$. Let $v=u(1-u')$. Then $g+g'$ is an upper gradient for $v$. Since $v$ vanishes on $E'$, 
\begin{align*}
\int_X (g+g')^p\le 2^p(\int_{X\setminus E'}g^p+\int_X g'^p)<2^{p+1}\eps.
\end{align*}
Since $v$ is a competitor for the condenser $E\cap F',F\cap F'$, we conclude that this condenser has small capacity. By definition of escaping, $E\cap F',F\cap F'$ contain escaping subsets, of even lower capacity.

\end{proof}

\begin{proposition}
\label{QIinv2}
Let $p\ge Q> 1$. Among locally connected, quasi-convex ul$Q$bg and $p$-PI spaces, liminal $p$-parabolicity is equivalent to coarse liminal $p$-parabolicity, hence it is a QI-invariant property by Proposition \ref{coarsestrictQI}.
\end{proposition}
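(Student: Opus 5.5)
The plan is to prove the two implications separately, each by contraposition, using Proposition \ref{QIinv1} for the parabolicity part. That proposition already says $p$-parabolicity and coarse $p$-parabolicity coincide on $p$-PI spaces. So it remains to compare the "strict" parts: the existence of escaping condensers of finite capacity, versus the finiteness of $\mathrm{Cap}_\psi(E+\rho',F+\rho')$ for some $E,F\in\mathcal S_\rho$.

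\emph{Liminal implies coarse liminal.} Suppose $X$ is $p$-parabolic but not coarsely liminal $p$-parabolic, and fix a kernel $\psi$. Then there is a $\rho>0$ such that for every $\rho'>0$ some $E,F\in\mathcal S_\rho$ satisfy $\mathrm{Cap}_\psi(E+\rho',F+\rho')<\infty$.

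Apply Lemma \ref{cap<coarse} with radius $\rho'$: there is a kernel $\psi_1$ with
$$\mathrm{Cap}_p(E,F)\le C\,\mathrm{Cap}_{\psi_1}(E+\rho',F+\rho').$$
Since all kernel norms are equivalent on $p$-PI spaces, the right-hand side is finite. Finiteness then persists when we pass to the sets $E\tilde+\rho\supset E$ and $F\tilde+\rho\supset F$, provided we choose $\rho'\ge 2\rho$ and apply the lemma to these enlarged sets; this works because $(E\tilde+\rho)+\rho'\subset E+2\rho'$. The sets $E\tilde+\rho$ and $F\tilde+\rho$ are escaping, since $E,F\in\mathcal S_\rho$. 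This produces an escaping pair of finite $p$-capacity, so $X$ is strictly $p$-parabolic, which contradicts liminality. This direction needs only the $p$-PI hypothesis.

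\emph{Coarse liminal implies liminal.} Suppose $X$ is strictly $p$-parabolic. We want to show that for every kernel $\psi$ there is a $\rho$ such that, for all $\rho'$, some pair in $\mathcal S_\rho$ has finite $\mathrm{Cap}_\psi(E+\rho',F+\rho')$. The steps are as follows.

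\begin{enumerate}
\item By Lemma \ref{small}, pick escaping $E,F$ with $\mathrm{Cap}_p(E,F)<\kappa$, where $\kappa$ is the constant from Proposition \ref{thick}.
\item Replace $E,F$ by unbounded closed connected subsets. These exist by the definition of escaping; taking closures preserves connectedness and does not increase capacity for continuous competitors. Their diameters are infinite, hence $\ge 2r$.
\item Such connected sets lie in $\mathcal S_\rho$ for every $\rho$. Indeed, $E\tilde+\rho$ is connected and unbounded, and removing a bounded set leaves an unbounded connected piece.
\item Given $\rho'$, Proposition \ref{thick}, which relies on quasi-convexity, the ul$Q$bg hypothesis and $p\ge Q$, gives $\mathrm{Cap}_p(E+\rho'',F+\rho'')\le C\,\mathrm{Cap}_p(E,F)<\infty$ for any $\rho''$.
\item Lemma \ref{coarse<cap}, applied in its condenser form (same proof), gives $\mathrm{Cap}_\psi(E+\rho',F+\rho')\le C\,\mathrm{Cap}_p(E+\rho'',F+\rho'')$, possibly after enlarging $\rho''$ a bit. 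This is finite, so $X$ is not coarsely liminal.
\end{enumerate}

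Together with Proposition \ref{QIinv1}, the two directions give the equivalence. QI invariance then follows from Proposition \ref{coarsestrictQI}, because ul$Q$bg together with quasi-convexity gives quasi-connectivity, and local connectedness is assumed.

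\emph{Main obstacle.} The delicate point is the escaping requirement in step 2. Proposition \ref{thick} needs closed connected sets, while Lemma \ref{small} only provides escaping sets. I would first check that an escaping set contains an unbounded connected subset whose closure stays away from the other set, so that a competitor equal to $1$ and $0$ on the original sets still works. A second, lesser point is the version of Lemma \ref{coarse<cap} for pairs $(E,F)$ rather than for $E$ alone. I expect its proof to carry over verbatim, since it only turns a competitor $u$ into the cocycle $du$.
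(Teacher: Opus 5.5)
Your overall route is the paper's. For liminal $\Rightarrow$ coarse liminal you use Lemma \ref{cap<coarse}; your contrapositive, with $E\tilde+\rho$ where the paper uses $E+\rho$, is fine. For the converse you chain Lemma \ref{small}, Proposition \ref{thick} and a condenser form of Lemma \ref{coarse<cap}. You are right that Proposition \ref{thick} needs closed connected sets while Lemma \ref{small} only yields escaping sets; the paper's proof applies it to the escaping sets directly. The gap is in your repair. Step 3 argues that an unbounded connected $E$ lies in $\mathcal S_\rho$ because ``removing a bounded set leaves an unbounded connected piece''. That is exactly the escaping property to be proved, and it does not follow from connectedness and unboundedness. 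Glue segments of lengths $1,2,3,\dots$ at one vertex to get a metric tree. It is connected and unbounded, yet after removing a large ball around the vertex only bounded pieces remain, whatever the thickening. So shrinking an escaping set to one unbounded connected subset can lose membership in $\mathcal S_\rho$. As written, your second direction does not produce the pairs in $\mathcal S_\rho$ needed to refute coarse liminality.

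The claim is nevertheless true under the standing hypotheses, so you can supply a proof. ul$Q$bg plus quasi-convexity make bounded sets totally bounded, and quasi-convexity gives $B(z,\rho/C_{qc})\subset\tilde B(z,\rho)$. Local connectedness makes $U=(E\tilde+\rho)\setminus\bar B(x_0,R_0)$ open with open components. Take $t=R_0+2\rho$; if $E$ misses $\bar B(x_0,t)$ there is nothing to prove. Otherwise $E\cap\{d(x_0,\cdot)=t\}$ is nonempty and is covered by finitely many components $C_1,\dots,C_k$ of $U$. If all $C_j$ were bounded, $E\cap\bigl(B(x_0,t)\cup\bigcup_j C_j\bigr)$ would be a bounded, nonempty, relatively open subset of $E$. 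Its complement in $E$ would be nonempty, and relatively open because its points lie beyond the sphere in other components of $U$. This disconnects $E$, a contradiction. Alternatively, avoid step 3 altogether. The union of the unbounded components of an escaping set is still escaping, hence in every $\mathcal S_\rho$. Each of its points lies on an unbounded connected subset, which is all the local oscillation argument behind Proposition \ref{thick} uses. Two smaller points. The constant $\kappa$ of Proposition \ref{thick} depends on the thickening radius, so $E,F$ in step 1 must be chosen after $\rho'$ is fixed; the quantifiers allow this. In the condenser form of Lemma \ref{coarse<cap}, first truncate $u$ to $[0,1]$, so that $du$ vanishes on $E\times E$ and $F\times F$ and equals $1$ on $E\times F$.
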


\begin{proof}

Let $X$ be a $p$-parabolic space. According to Proposition \ref{QIinv1}, $X$ is also coarsely $p$-parabolic.

First, assume that $X$ is liminal $p$-parabolic. According to Lemma \ref{cap<coarse}, for a suitable kernel $\psi$ and radius $r$, for every subsets $E_0,F_0\subset X$,
\begin{align*}
\mathrm{Cap}_{p}(E_0,F_0)\le C\,\mathrm{Cap}_{\psi}(E_0+r,F_0+r).
\end{align*}
Let $\rho>0$. If $E+\rho$ and $F+\rho$ are escaping, 
\begin{align*}
+\infty=\mathrm{Cap}_{p}(E+\rho,F+\rho)\le C\,\mathrm{Cap}_{\psi}(E+\rho+r,F+\rho+r).
\end{align*}
Setting $\rho'=\rho+r$, we see that $X$ is coarsely liminal $p$-parabolic.

Second, assume that $X$ is not liminal $p$-parabolic, i.e. $X$ is strictly $p$-parabolic. Choose $r=0$ and fix $r'\ge 0$. Lemma \ref{small} provides us with escaping subsets $E,F\in\mathcal{S}_0(X)$ such that $\mathrm{Cap}_p(E,F)<\kappa$, the bound provided by Proposition \ref{thick}. This Proposition ensures that 
$$
\mathrm{Cap}_p(E+r',F+r')\le C\,\mathrm{Cap}_p(E,F)<+\infty.
$$
Since $X$ is $p$-PI, Lemma \ref{coarse<cap} provides a kernel $\psi$ such that
$$
\mathrm{Cap}_\psi(E+r',F+r')\le C\,\mathrm{Cap}_p(E+r',F+r')<+\infty.
$$
Thus $X$ is coarsely strictly $p$-parabolic, hence not coarsely liminal $p$-parabolic. 

This proves the equivalence of liminal $p$-parabolicity and coarse liminal $p$-parabolicity.
\end{proof}

\section{Rumin conformal cohomology}
\label{rumin}

The goal of this section is to describe Rumin conformal cohomology, to relate it to QI invariants and to prove the following QC invariance statement.

\begin{proposition} \label{qcRumin}
The Rumin conformal cohomology is a quasiconformal invariant of contact subRiemannian manifolds.
\end{proposition}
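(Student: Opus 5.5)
The plan is to mimic the Riemannian argument of Gol'dshtein--Troyanov \cite{GoldTroyConf}: interpret the conformal cohomology in degree $k$ as the quotient of closed forms in the ``conformal'' $L^{p_k}$ space (with $p_k = Q/w_k$, where $Q=2n+2$ is the homogeneous dimension and $w_k$ the weight of Rumin $k$-forms) by differentials of forms in the corresponding space one degree below. Then show that pullback by a QC map is a bounded operator on each of these spaces and commutes with the Rumin differential $d_c$. Concretely, I would first set up the complex. On a contact manifold of dimension $2n+1$, Rumin forms of degree $k\le n$ have weight $k$ and those of degree $k\ge n+1$ have weight $k+1$. A form of weight $w$ is then taken in $L^{Q/w}$, which is exactly the scale-invariant norm under Carnot dilations. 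The $k$-th conformal cohomology is $\{\omega\in L^{Q/w_k}: d_c\omega=0\}$ modulo $d_c\{\alpha\in L^{Q/w_{k-1}}: d_c\alpha\in L^{Q/w_k}\}$, with $d_c$ taken in the distributional sense. In degree $2$ this gives the pair $(q,p)=(2n+2,n+1)$ from the introduction.

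The second step is the pullback. A QC map $f$ between contact subRiemannian manifolds is Pansu differentiable almost everywhere, with differential a graded group automorphism, and it is absolutely continuous, $W^{1,Q}_{loc}$ on horizontal directions. By \cite{KMX}, the pullback $f^*$ of Rumin forms is well defined for forms with Lipschitz or smooth coefficients and satisfies $f^*d_c=d_cf^*$ in the weak sense. The pointwise norm of $f^*\omega$ at $x$ is bounded by $|D_Hf(x)|^{w}|\omega(f(x))|$, since the Pansu differential acts on weight-$w$ forms through $w$ horizontal factors. The $K$-quasiconformality inequality $|D_Hf|^Q\le K\,J_f$ then gives
\begin{align*}
\int|f^*\omega|^{Q/w}\le\int |D_Hf|^{Q}|\omega\circ f|^{Q/w}\le K\int|\omega\circ f|^{Q/w}J_f=K\int|\omega|^{Q/w}.
\end{align*}
So $f^*$ is bounded on each space of the complex, and $(f^{-1})^*$ is its inverse because $f^{-1}$ is also QC and the chain rule for Pansu differentials holds almost everywhere.

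The third step, which I expect to be the main obstacle, is extending $f^*$ from smooth forms to all $L^{Q/w}$ forms with $d_c\omega\in L^{Q/w'}$ while keeping commutation with $d_c$. The difficulty is that $d_c$ has order $2$ in the middle degree $n\to n+1$, and a Sobolev map is not smooth enough to pull back arbitrary distributions naively. I would first approximate $\omega$ by smooth forms $\omega_j$ in the graph norm, using mollification along the group structure in Darboux-type charts and the fact that Rumin's smoothing operators commute with $d_c$ up to controlled errors. Then I would pass to the limit in $f^*\omega_j$ using the bound above together with the weak commutation $\int f^*\omega_j\wedge d_c\beta=\pm\int f^*d_c\omega_j\wedge\beta$ for compactly supported test forms $\beta$. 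In degree $2$ with $n\ge 1$ only first-order $d_c$ is involved, except when $n=1$, so it may suffice to treat that case by hand or to restrict the statement to the degrees actually used. Once $f^*$ is a bounded cochain isomorphism with bounded inverse, it induces an isomorphism of conformal cohomology, which proves Proposition \ref{qcRumin}.
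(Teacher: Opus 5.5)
Your overall architecture matches the paper's. You bound the Pansu pullback in the conformal exponents via $|D_Hf|^{2n+2}\lesssim \mathrm{Jac}(f)$ and the change of variables formula, take commutation with $d_c$ from \cite{KMX}, and invert using that $f^{-1}$ is QC.

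\textbf{The gap: the regularity you feed into \cite{KMX}.} You only record $f\in W^{1,Q}_{loc}$ with $Q=2n+2$. The chain-map theorem you quote (Theorem 1.5 of \cite{KMX}) is for $W^{1,p}$ maps with $p>2n+2$ strictly. Nothing there gives $f_P^*d_c=d_cf_P^*$ at the borderline exponent, so your second step has no justification as written.

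The missing idea is Gehring's higher integrability, in the metric form of Heinonen--Koskela (Theorem 7.11 in \cite{HK}). The Jacobian of a QC map is in $L^r_{loc}$ for some $r>1$. The inequality $|D_Hf|^{2n+2}\le K^{2n+2}\mathrm{Jac}(f)$ then puts $f$ in $W^{1,p}_{loc}$ with $p=(2n+2)r>2n+2$.

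Relatedly, QC is defined here by the infinitesimal metric condition, so you should first pass to local quasisymmetry (Theorem 4.7 in \cite{HK}, valid on quasiconvex ul$Q$bg spaces). That step is what makes the a.e.\ Pansu differentiability of \cite{Pansu_Annals}, \cite{MM} and the higher integrability available.

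\textbf{Consequences for your third step.} With $p>2n+2$ in hand, \cite{KMX} covers the entire Rumin complex, including the second-order operator $E^n\to E^{n+1}$. So your case distinction is unnecessary, and restricting the statement to selected degrees would prove less than the proposition. Your count is also off: in degree $2$ the second-order operator appears for $n=2$ (as $E^2\to E^3$), as well as for $n=1$.

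\textbf{On extending from smooth forms.} Boundedness needs no density. Since $f$ and $f^{-1}$ are absolutely continuous, $Df(x)^*\omega(f(x))$ is defined a.e.\ for any measurable $\omega$, and your pointwise estimate integrates directly. Only the commutation with $d_c$ on the graph domain calls for an approximation or duality argument, a point the paper itself passes over in one line.
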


\subsection{Rumin $L^{q,p}$ cohomology}

Let $M$ be a smooth $2n+1$-dimensional manifold equipped with a contact structure $\xi$, viewed as a subbundle of the tangent bundle. In the differential graded algebra of smooth differential forms $\Omega^\cdot$, let $\mathcal{I}^\cdot$ be the graded differential ideal generated by $1$-forms that vanish on $\xi$. Then $\mathcal{I}^k=\Omega^k$ in degrees $k>n$.
Let $\mathcal{J}$ be its annihilator, i.e. smooth forms whose wedge products with elements of $\mathcal{I}^\cdot$ of complementary degree vanish. Then $\mathcal{J}^k$ vanishes in degrees $k\le n$. The exterior differential induces differentials $d_c$ on $\Omega^\cdot/\mathcal{I}^\cdot$ and on $\mathcal{J}^\cdot$. In \cite{Rumin}, M. Rumin introduced a second order differential operator, still denoted by $d_c:\Omega^n/\mathcal{I}^n\to\mathcal{J}^{n+1}$, which completes the sequence 
\begin{align*}
0\to\Omega^0/\mathcal{I}^0\stackrel{d_c}{\to}\cdots\stackrel{d_c}{\to}\Omega^n/\mathcal{I}^n\stackrel{d_c}{\to}\mathcal{J}^{n+1}\stackrel{d_c}{\to}\cdots\stackrel{d_c}{\to}\mathcal{J}^{2n+1}\to 0
\end{align*}
into a cochain complex called \emph{Rumin's complex}. This new cochain complex is chain homotopic to the de Rham complex. M. Rumin shows that the whole complex is invariant under smooth contact transformations.

Let us introduce metrics. A \emph{subRiemannian metric $g$} on $M$ is a smooth metric on the fibers of the subbundle $\xi$. Locally, $\Omega^\cdot/\mathcal{I}^\cdot$ and $\mathcal{J}^\cdot$ can be viewed as the spaces of smooth sections of a graded bundle $E^{\cdot}$. The subRiemannian metric determines a smooth metric on the fibers of $E^\cdot$ and a volume form. Therefore one can speak of $L^p$ sections of $E^\cdot$, called \emph{$L^p$ Rumin forms}. Given a sequence $\mathbf{p}=(p_k)\in [1,\infty]$, the differential operators $d_c$ extend to 
\begin{align*}
L^{\mathbf{p}}E^{\cdot}:=\bigoplus_{k}\{\omega\in L^{p_k} E^k\,;\,d_c\omega\in L^{p_{k+1}} E^{k+1}\},
\end{align*}
where the statement $d_c\omega\in L^{p_{k+1}} E^{k+1}$ is taken in distributional sense. This is still a complex, whose cohomology, denoted by
\begin{align*}
L^{\mathbf{p}}H^{\cdot}(M,\xi,g)
\end{align*} 
is called the $L^{\mathbf{p}}$ cohomology of the subRiemannian contact manifold $(M,\xi,g)$.

\subsection{QI invariance of Rumin $L^{q,p}$ cohomology}

In \cite{BFP}, it is shown that, under conditions on the sequence $\mathbf{p}$, the $L^{\mathbf{p}}$ Rumin complex is locally exact. In \cite{Pansu_cup}, under additional bounded geometry assumptions, this local information is turned into a global chain homotopy of the $L^{q,p}$-Rumin complex with a QI invariant chain complex, the $\ell^{q,p}$ complex of simplicial cochains of a polyhedral approximation of the metric space $(M,g)$. This last complex will not be defined in detail, and called the $\ell^{q,p}$-complex of $M$ for short.

Here, to avoid sophistications, we merely state nonlimiting cases.

\begin{proposition}[Theorem 4 in \cite{Pansu_cup}] \label{qir}
Let $1\le h\le 2n+1$. Let $\mathbf{p}$ be a nonincreasing sequence in $(1,\infty)$. Assume that $\frac{1}{p_{k}}-\frac{1}{p_{k-1}}\leq \frac{1}{2n+2}$ (to be replaced with $\frac{1}{p_{n+1}}-\frac{1}{p_{n}}\leq \frac{2}{2n+2}$ in degree $k=n+1$). 

Consider the class of $2n+1$ dimensional contact subRiemannian manifolds with the following properties:
\begin{enumerate}
  \item Bounded geometry.
  \item Uniform vanishing of cohomology of to degree $h$.
\end{enumerate}

For $(M,\xi,g)$ in this class, the $L^{q,p}$-Rumin complex and the $\ell^{q,p}$-complex of $M$ are chain homotopic up to degree $h$. In particular, the cohomology spaces  
\begin{align*}
L^{\mathbf{p}}H^{\cdot}(M,\xi,g)\quad\text{and}\quad E\ell^{\mathbf{p}}H^{\cdot}(M)
\end{align*}
(the subspace of $\ell^{\mathbf{p}}$ cohomology represented by exact cocycles) are isomorphic up to degree $h+1$.
\end{proposition}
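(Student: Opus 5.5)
The plan is to prove the statement by a Čech–Rumin double complex argument with $L^p$ bounds kept uniform throughout. That is the standard way local exactness results such as \cite{BFP} are globalized, and it is how I expect Theorem 4 of \cite{Pansu_cup} to be proved.

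\textbf{Step 1: the cover and the discrete model.} Using bounded geometry, fix a radius $\epsilon>0$ below the injectivity/normal-coordinate scale. Choose a maximal $\epsilon$-separated set $\{x_i\}$ in $M$ and the cover $\mathcal{U}$ by balls $B(x_i,\epsilon)$. This cover has uniformly bounded multiplicity. Its nerve $\mathcal{N}$ is a bounded degree simplicial complex quasiisometric to $(M,g)$; this is the polyhedral approximation, and its $\ell^{\mathbf{p}}$ simplicial cochains form the $\ell^{q,p}$-complex. I will also use a finite chain of enlarged covers $\mathcal{U}_j=\{B(x_i,\lambda^j\epsilon)\}$, $j\le h+1$, to absorb the loss of domain in each local homotopy.

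\textbf{Step 2: the double complex and its rows.} Form $C^{j,k}=\prod_{|\sigma|=j}L^{p_k}E^k(U_\sigma)$, with the Čech differential $\delta$ and the Rumin differential $d_c$. The rows are the Čech direction. I will show they are exact with bounded homotopy using a partition of unity $\{\chi_i\}$ with uniformly bounded derivatives. The point is that $E^k$ is a module over functions ($\Omega/\mathcal{I}$ and $\mathcal{J}$ are both $C^\infty$-modules), so $(Kc)_\sigma=\sum_i\chi_i c_{i\sigma}$ makes sense. The map $K$ is bounded on $L^{p_k}$ uniformly because the cover has bounded multiplicity.

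\textbf{Step 3: the columns.} The columns are the $d_c$ direction. I will invoke \cite{BFP}: on a ball of radius $\lambda r$ there are homotopy operators $T$ with $\omega=d_cT\omega+Td_c\omega$ on the smaller ball. These are bounded $L^{p_k}\to L^{p_{k-1}}$ exactly under the hypothesis $\frac{1}{p_k}-\frac{1}{p_{k-1}}\le\frac{1}{2n+2}$, which is a gain of one order in the Sobolev sense, and with $\frac{2}{2n+2}$ in degree $n+1$, where $d_c$ is second order. By bounded geometry, the constants are uniform over all balls and all intersections $U_\sigma$. The hypothesis of uniform vanishing of cohomology up to degree $h$ is what makes these local homotopies exist, with uniform bounds, on the intersections $U_\sigma$ and not only on balls, in degrees $\le h$. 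In degree $0$ the kernel consists of locally constant functions, and it identifies $\ker d_c$ on the first column with the simplicial cochains of $\mathcal{N}$.

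\textbf{Step 4: the zigzag.} The standard tic-tac-toe zigzag, alternating $K$ and $T$ and passing from $\mathcal{U}_{j+1}$ to $\mathcal{U}_j$ at each step, then produces chain maps between the $L^{\mathbf{p}}$-Rumin complex and the $\ell^{\mathbf{p}}$-complex of $\mathcal{N}$. It also produces bounded homotopies between their composites and the identities, up to degree $h$. Since $\mathcal{N}$ has bounded degree, all finite-dimensional fiberwise norms are uniformly equivalent, so the exponents match degree by degree. A chain homotopy equivalence up to degree $h$ induces isomorphisms in cohomology up to degree $h$. One degree further, degree $h+1$, only the cocycles that are exact in the unrestricted complex are controlled; this gives the identification with the exact subspace $E\ell^{\mathbf{p}}H^{h+1}$.

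\textbf{Main obstacle.} I expect the main obstacle to be Step 3: keeping the constants uniform while domains shrink, and handling the middle degree, where $d_c$ is second order. There the product rule for $d_c(\chi\omega)$ involves first derivatives of $\omega$. My first attempt would be to use the $L^p$ bounds on $\nabla_H T\omega$ from \cite{BFP} to control these commutator terms, and to fix the shrinking ratio $\lambda$ once so that only finitely many, $h+1$, covers occur.
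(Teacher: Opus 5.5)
The gap is that you never use the uniform vanishing of cohomology where it is actually needed, so your argument does not reach the complex the statement is about. Your overall route is the one the paper cites: \cite{BFP} for local exactness of the Rumin complex, globalized in \cite{Pansu_cup} by a partition-of-unity/local-homotopy zigzag. Your plan for the middle degree, controlling $[d_c,\chi]$ through derivative bounds on $T\omega$, is also reasonable.

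Step 3 misplaces the hypotheses. Bounded geometry gives uniform BFP constants on balls below the bounded-geometry scale. It does not give them on arbitrary intersections $U_\sigma$: two $\epsilon$-balls whose centers are almost $2\epsilon$ apart meet in a thin region. On such a region, Sobolev--Poincar\'e constants with a gain $\frac{1}{p_k}-\frac{1}{p_{k-1}}>0$ blow up. Uniform vanishing of cohomology cannot repair this. It is a qualitative, large-scale condition (the restriction maps $H^k(B(x,R'))\to H^k(B(x,R))$ vanish for $k\le h$) and carries no $L^p$ information. At small scale BFP already gives exactness on balls with no topological input. Your own enlarged covers remove the small-scale difficulty. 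Choose $\lambda$ so that $U^{(j)}_\sigma\subset B(x_{i_0},\lambda^j\epsilon)$ while the dilate of this ball required by BFP lies in $U^{(j+1)}_\sigma$, and only ever invert $d_c$ on such balls. This also makes the degree-$0$ kernel consist of genuine constants, which you need to land in simplicial cochains. In addition, $C^{j,k}$ has to be an $\ell^p$-sum over $\sigma$, not a product, or the global bounds are lost.

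Once this is fixed, nothing in your argument uses the topological hypothesis. It compares the $L^{\mathbf p}$ Rumin complex, in every degree, with the $\ell^{\mathbf p}$ cochains of the fine nerve $\mathcal N$. The paper describes the target as a QI-invariant complex, which is what Corollary \ref{QIHk} needs, and fine-scale cohomology is not QI invariant beyond the range of uniform vanishing. Already for $p=2$ in the Riemannian case, the product of the hyperbolic plane with $S^2$ is QI to the hyperbolic plane, yet its $L^2$ cohomology in degree $3$ is infinite dimensional.

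What is missing is the passage from scale $\epsilon$ to a polyhedral approximation at large scale. One way is to produce primitives, with uniform $L^{\mathbf p}$ bounds, of closed Rumin forms on large balls $B(x,R')$ over $B(x,R)$. You would run the small-scale homotopies inside the ball, leaving a simplicial cocycle on a finite nerve of uniformly bounded size. The vanishing of $H^k(B(x,R'))\to H^k(B(x,R))$ then kills that cocycle with bounded primitives. This step is where the restriction to degrees $\le h$, and to exact classes in degree $h+1$, comes from. Your Step 4 asserts the identification with $E\ell^{\mathbf p}H^{h+1}$ without deriving it, and nothing in a fine-scale argument singles out exact cocycles.
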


Combining Proposition \ref{qir} with \cite{Ducret} leads to

\begin{corollary} \label{QIHk}
Let $1\le h\le 2n+1$. Let $1<p\le q<\infty$, $\frac{1}{p}-\frac{1}{q}\leq \frac{1}{2n+2}$ (to be replaced with $\frac{1}{p}-\frac{1}{q}\leq \frac{2}{2n+2}$ if $k=n+1$). In the above class of subRiemannian contact manifolds, for $k\le h+1$, $L^{q,p}H^{k}(M,\xi,g)$ is a QI invariant.

\end{corollary}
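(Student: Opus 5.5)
The plan is to combine Proposition \ref{qir} with Ducret's theorem \cite{Ducret}: the Rumin cohomology is identified with a purely combinatorial object, and that object is already known to be QI invariant. Fix $k\le h+1$. Take the constant sequence $\mathbf{p}$ equal to $q$ in degrees $\ge k$ and to $p$ in degrees $<k$. This is nonincreasing since $p\le q$. Its only jump, between degrees $k-1$ and $k$, is $\frac1p-\frac1q$, so the hypothesis of Proposition \ref{qir} holds. That jump is allowed to be $\le\frac{2}{2n+2}$ when $k=n+1$, because there $d_c$ has order two. With this choice, $L^{\mathbf{p}}H^k(M,\xi,g)$ is exactly $L^{q,p}H^k(M,\xi,g)$.

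Proposition \ref{qir} then gives an isomorphism
\begin{align*}
L^{q,p}H^{k}(M,\xi,g)\cong E\ell^{q,p}H^{k}(M),
\end{align*}
for $M$ in the given class, with bounded geometry and uniform vanishing of cohomology up to degree $h$. Here the right-hand side is computed on a polyhedral approximation (a Rips-type complex or bounded-geometry triangulation) of the metric space $(M,g)$. It remains to show that $E\ell^{q,p}H^k$ is a QI invariant.

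Let $f:M\to N$ be a quasiisometry between two manifolds of the class. I will quote \cite{Ducret}: QIs between uniformly contractible bounded-geometry spaces, or spaces with uniformly vanishing cohomology up to the relevant degree, induce chain homotopy equivalences of the $\ell^{q,p}$ simplicial cochain complexes up to that degree. The construction pulls back cochains along simplicial approximations of $f$ and of a quasi-inverse, and builds the homotopies using uniform acyclicity. Such a homotopy equivalence maps $\ell^{q,p}$-cocycles to $\ell^{q,p}$-cocycles and exact cocycles to exact cocycles, since it commutes with the coboundary. Hence it induces an isomorphism on the subspaces $E\ell^{q,p}H^k$ of classes represented by exact cocycles. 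Composing with the isomorphisms from Proposition \ref{qir} on $M$ and on $N$ gives $L^{q,p}H^k(M)\cong L^{q,p}H^k(N)$.

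The main point to check is the degree range. Ducret's chain homotopy must be valid up to degree $k\le h+1$, while uniform vanishing of cohomology is assumed only up to degree $h$. I expect this to go through because the same shift occurs in Proposition \ref{qir}: a chain homotopy defined up to degree $h$ already determines the maps on degree-$(h+1)$ cocycles. These are exact by uniform vanishing, and passing to the reduced subspace $E\ell$ is what makes degree $h+1$ work. I would check that the homotopy operators in \cite{Ducret} need acyclicity only in degrees $\le h$ to handle cochains of degree $h+1$ modulo coboundaries. Finally, bounded geometry and uniform vanishing of cohomology are preserved within the class, so both $M$ and $N$ satisfy the hypotheses simultaneously.
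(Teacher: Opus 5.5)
Your route is the paper's: there the corollary is obtained by combining Proposition \ref{qir}, which identifies $L^{\mathbf p}H^k$ with $E\ell^{\mathbf p}H^k$ for $k\le h+1$, with Ducret's quasi-isometry invariance of the $\ell^{q,p}$ complex. Two points in your write-up need correcting.

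First, your sequence is backwards. $L^{\mathbf p}H^k$ consists of closed $L^{p_k}$ forms modulo $d_c$ of $L^{p_{k-1}}$ forms, and in this paper $L^{q,p}H^k$ means closed $L^p$ $k$-forms modulo $d_c$ of $L^q$ $(k-1)$-forms. For instance, the conformal sequence with $p_1=2n+2$, $p_2=n+1$ gives the space written $L^{2n+2,n+1}H^2$. So you need $p_j=q$ for $j\le k-1$ and $p_j=p$ for $j\ge k$. The sequence you wrote, $p$ below degree $k$ and $q$ from degree $k$ on, is nondecreasing when $p\le q$. Proposition \ref{qir} would then not apply, and you would be computing a different space. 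Your statements that the sequence is nonincreasing with jump $\frac1p-\frac1q$ are true of the corrected sequence, so this is a slip, but it must be fixed.

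Second, in degree $h+1$ the cocycles are not exact ``by uniform vanishing'', since vanishing is only assumed up to degree $h$. Exactness comes from the definition of $E\ell^{q,p}H^{h+1}$. Its classes are represented by $\ell^p$ cocycles $\omega=d\phi$, where $\phi$ is an arbitrary cochain, not necessarily in $\ell^q$. This is not reduced cohomology, so that word should go.

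The mechanism you were looking for is then as follows. Let $f^*,g^*$ be the QI-induced chain maps, defined on all cochains and commuting with $d$. Suppose $g^*f^*-1=dK+Kd$ holds in degrees $\le h$, so $K$ is defined on cochains of degree $\le h+1$. Apply this identity to $\phi$ and differentiate to get
\begin{align*}
g^*f^*\omega-\omega=d(K\omega),
\end{align*}
where $K\omega\in\ell^q$ by boundedness of $K$. Hence the induced maps are mutually inverse on $E\ell^{q,p}H^{h+1}$ using only the degree-$h$ homotopy, which is the check you proposed to make in \cite{Ducret}.
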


We note that a cocompact conctactomorphic and isometric action of some group on $(M,\xi,g)$ automatically implies the bounded geometry assumption and upgrades vanishing of cohomology into uniform vanishing of cohomology. Here, we shall merely need degree $2$, so only the vanishing of $H^1(M,\R)$ is required. The examples constructed in Section \ref{construction} indeed have vanishing ordinary $H^1$.

\subsection{Rumin conformal cohomology}

\begin{definition} \label{defRcoh}
The \emph{Rumin conformal cohomology} of a $2n+1$-dimensional subRiemannian contact manifold $(M,\xi,g)$ is its Rumin $L^{\mathbf{p}}$ cohomology for the \emph{conformal sequence} defined by
\begin{align*}
p_k=\begin{cases}
\frac{2n+2}{k} & \text{if }k\le n, \\
\frac{2n+2}{k+1}  & \text{otherwise}.
\end{cases}
\end{align*}
\end{definition}

According to Corollary \ref{QIHk}, the Rumin conformal cohomology, except in degrees $0$ and $2n+1$, is a QI invariant of $2n+1$-dimensional subRiemannian contact manifolds with bounded geometry and uniform vanishing of cohomology in suitable degrees.

\subsection{QC invariance of Rumin conformal cohomology}

The Rumin conformal cohomology is invariant under smooth QC contact transformations by design. Indeed, the $L^{p_k}$-norm of Rumin $k$-forms is exactly invariant under conformal changes of subRiemannian metric, and invariant up to bounded multiplicative factors under quasiconformal changes.

The issue is to handle weakly regular homeomorphisms between subRiemannian contact manifolds. It turns out that QC maps have just enough regularity for our purposes.

\subsubsection{First step} QC maps between quasiconvex ul$Q$bg spaces are locally quasisymmetric.

In the plane, this is Gr\"otzsch's theorem. The generalization is due to J. Heinonen and P. Koskela, (Theorem 4.7 in \cite{HK}).

\subsubsection{Second step} Quasisymmetric maps between equiregular subRiemannian manifolds are a.e. differentiable. 

This is an avatar of Rademacher and Stepanov's a.e. differentiability theorem, which can be found in \cite{Pansu_Annals} and \cite{MM}.

\subsubsection{Third step} The Jacobian of a QC map belongs to $L^r$ for some $r>1$. It follows that the first horizontal partial derivatives of a QC map $f$ belong to $L^p$ for a $p>2n+2$, i.e. $f\in W^{1,p}$.

This famous result of F. Gehring in Euclidean spaces has been generalized by J. Heinonen and P. Koskela, (Theorem 7.11 in \cite{HK}).

\subsubsection{Fourth step} $W^{1,p}$ maps between subRiemannian contact manifolds, $p > 2n+2$, induce chain maps on Rumin's complex.

This is Theorem 1.5 in \cite{KMX}. On open subsets of Heisenberg group $\mathbb{H}_n$, the chain map $f_P^*$, called \emph{Pansu pullback}, is defined as follows. Let $\mathfrak{h}=\mathfrak{h}_1 \oplus \mathfrak{h}_2$ be the gradation of the Lie algebra of $\mathbb{H}_n$. Let $\theta$ denote a $1$-form dual to $\mathfrak{h}_2$. Let $L:\bigwedge^{\cdot}\mathfrak{h}_1^*\to \bigwedge^{\cdot}\mathfrak{h}_1^*$ denote wedge multiplication with the symplectic 2-form $\Omega$ defined by the Lie bracket, $\Omega=\theta\circ[\cdot,\cdot]$.
The bundle $E^{\cdot}$ is left-invariant, its fibre is 
\begin{align*}
E^{\cdot\le n}&=\bigwedge^{\cdot}\mathfrak{h}_1^*/(\mathrm{Im}(L)),\\
E^{\cdot> n}&=\theta\wedge(\mathrm{Ker}(L)).
\end{align*}
Graded homomorphisms $\Phi:\mathfrak{h}\to\mathfrak{h}$ act on $\bigwedge^\cdot\mathfrak{h}$ by pull-back. They preserve the subspaces $\R\theta$ and $\R\Omega$, hence $\mathrm{Im}(L)$ and $\mathrm{Ker}(L)$. Thus they act on $E^\cdot$.
Therefore, given a smooth Rumin form $\omega$, if a map $f$ is a.e. differentiable, the Rumin form $x\mapsto Df(x)^*\omega(f(x))$ is a.e. defined, this is $f_P^*\omega$.

\subsubsection{Fifth step} For a QC map, the chain map induced on Rumin forms extends to a bounded operator on $L^{\mathbf{p}}E^{\cdot}$, provided $\mathbf{p}$ is the conformal sequence.

If $f$ is a $K$-QC map, then the graded automorphism $Df$ is $K$-quasiconformal as well. Let $A=D_\mathbb{H}f$ denote its restriction to $\mathfrak{h}_1$, let $\sigma_1\le\cdots\sigma_{2n}$ be its singular values (the square roots of the eigenvalues of $A^\top A$). Then
\begin{align*}
\frac{\sigma_{2n}}{\sigma_{1}}\le K.
\end{align*}
Since $|A|=\sigma_{2n}$ and 
\begin{align*}
\mathrm{det}(A)=\sqrt{\mathrm{det}(A^{\top}A)}=\prod \sigma_i\ge \sigma_1^{2n},
\end{align*}
this implies that
\begin{align*}
|A|^{2n}=\sigma_{2n}^{2n}\le K^{2n}\,\mathrm{det}(A).
\end{align*}
On the other hand, if $(Df)^*\theta=\lambda\theta$, then $(Df)^*\Omega=\lambda\Omega$, hence
\begin{align*}
\mathrm{Jac}(f)\theta\wedge\Omega^n
=(Df)^*\theta\wedge ((Df)^*\Omega)^n
=\lambda^{1+n}\theta\wedge\Omega^n.
\end{align*}
Also,
\begin{align*}
\mathrm{det}(A)\Omega^n
=(D_{\mathbb{H}}f^*\Omega)^n
=\lambda^n\Omega^n,
\end{align*}
so
\begin{align*}
\mathrm{Jac}(f)=\mathrm{det}(A)^{(n+1)/n},
\end{align*}
thus
\begin{align*}
|D_{\mathbb{H}}f|^{2n+2}\le K^{2n+2}\mathrm{Jac}(f).
\end{align*}
As an operator on $E^{k}$, $Df$ has norm
\begin{align*}
|E^{k}Df|&\le|\bigwedge^k D_{\mathbb{H}}f| \le K^k\mathrm{Jac}(f)^{k/(2n+2)} \quad \text{if }k\le n,\\
|E^{k}Df|&\le\lambda|\bigwedge^{k-1} D_{\mathbb{H}}f| \le K^{k-1}\mathrm{Jac}(f)^{(k+1)/(2n+2)} \quad \text{if }k> n.
\end{align*}
For a smooth Rumin $k$-form $\omega$, at a.e. $x$,
\begin{align*}
|f_P^*\omega(x)|^{p_k}\le K^{2n+2}|\omega(f(x))|^{p_k}\mathrm{Jac}(f)(x).
\end{align*}
Since QC maps are absolutely continuous (\cite{MM}), the change of variable formula holds and yields
\begin{align*}
\int |f_P^*\omega|^{p_k}\le K^{2n+2}\int |\omega|^{p_k}
\end{align*}
This shows that Pansu pullback preserves $L^{p_k}$ norms up to a factor depending on  $K$ only. Since, according to \cite{KMX}, pullback commutes with $d_c$, it extends to a bounded operator on $L^{\mathbf{p}}E^{\cdot}$. 

\subsubsection{Conclusion} QC equivalent subRiemannian contact manifolds have homotopic conformal Rumin complexes, hence isomorphic conformal Rumin cohomologies. 

Indeed, the inverse map of a QC homeomorphism is itself QC, since it is quasisymmetric. This completes the proof of Proposition \ref{qcRumin}.

\section{Lie algebra cohomology}
\label{lie}

This is the cohomology of the exterior differential restricted to left-invariant differential forms. This is purely algebraic. For instance, if $\omega$ is a left-invariant $1$-form and $X,Y$ are left-invariant vector fields, 
\begin{align}\label{d}
d\omega(X,Y)=-\omega([X,Y]).
\end{align}

\subsection{Weights}

Say a left-invariant differential form $\omega$ has weight $w$ if $\delta_t^*\omega=e^{tw}\omega$. Forms in each degree split as sums of homogeneous forms,
$$
{\bigwedge}^{k}\mathfrak{g}^*=\bigoplus_{w}{\bigwedge}^{k,w}.
$$
Weight is additive under wedge products, it takes nonzero integer values on $1$-forms, so weights on $k$-forms are integers $\ge k$. For instance, 
$$
{\bigwedge}^{1,w}=\{\omega\in\mathfrak{g}^*\,;\,\omega(X)=0\,\forall X\in \mathfrak{g}_1\oplus\cdots \oplus\mathfrak{g}_{w-1}\oplus \mathfrak{g}_{w+1}\oplus\cdots\oplus\mathfrak{g}_s\}.
$$

Weight is preserved by the exterior differential, hence the Lie algebra cohomology splits as well,
$$
H^{k}(\mathfrak{g})=\bigoplus_{w}H^{k,w}.
$$

\begin{definition}
Let $\mathfrak{g}$ be a Carnot Lie algebra. $w_{min}(\mathfrak{g})$ (resp. $w_{max}(\mathfrak{g})$) is the least (resp. largest) weight $w$ such that $H^{2,w}(\mathfrak{g})$ does not vanish.
\end{definition}

For instance, if $\mathfrak{a}$ is an abelian Lie algebra of dimension $\ge 2$, $w_{min}(\mathfrak{a})=w_{max}(\mathfrak{a})=2$. If $\mathfrak{h}(n)$ denotes the $n$-th Heisenberg Lie algebra, then $w_{min}(\mathfrak{h}(n))=w_{max}(\mathfrak{h}(n))=2$ if $n\ge 2$, but $w_{min}(\mathfrak{h}(1))=w_{max}(\mathfrak{h}(1))=3$. In most cases, $w_{min}(\mathfrak{g})<w_{max}(\mathfrak{g})$.

\subsection{Algebraic criterion for high $w_{min}$}

\begin{notation}
Let $\mathfrak{g}$ be a Lie algebra. The descending central series is defined inductively by $\mathfrak{g}^{(1)}=\mathfrak{g}$ and $\mathfrak{g}^{(i+1)}=[\mathfrak{g},\mathfrak{g}^{(i)}]$.
\end{notation}
By definition, for a Carnot Lie algebra $\mathfrak{g}=\mathfrak{g}_1\oplus\cdots\oplus\mathfrak{g}_s$, 
\begin{align*}
\mathfrak{g}^{(i)}=\mathfrak{g}_{i}\oplus\cdots\oplus\mathfrak{g}_s.
\end{align*}

\begin{lemma}\label{lem: algebraic criterion for high w_min}
A Carnot Lie algebra $\mathfrak{f}$ has $w_{min}(\mathfrak{f})\ge 3$ if and only if its $2$-step quotient $\mathfrak{f}/\mathfrak{f}^{(3)}$ is free.
\end{lemma}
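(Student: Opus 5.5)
We compute $H^{2,2}(\mathfrak{f})$ directly, since $w_{min}(\mathfrak{f})\ge 3$ means exactly that $H^{2,2}(\mathfrak{f})=0$. Here $H^{2,w}=0$ for $w<2$ automatically, because weights on $2$-forms are integers $\ge 2$. Write $\mathfrak{f}=\mathfrak{f}_1\oplus\cdots\oplus\mathfrak{f}_s$. The weight $2$ part of the cochain complex is
$$
{\bigwedge}^{1,2}=\mathfrak{f}_2^*\stackrel{d}{\to}{\bigwedge}^{2,2}={\bigwedge}^2\mathfrak{f}_1^*\stackrel{d}{\to}{\bigwedge}^{3,2}=0,
$$
since a $3$-form has weight $\ge 3$. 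Therefore $H^{2,2}(\mathfrak{f})=\bigwedge^2\mathfrak{f}_1^*/d(\mathfrak{f}_2^*)$, and the claim reduces to: $d:\mathfrak{f}_2^*\to\bigwedge^2\mathfrak{f}_1^*$ is onto if and only if $\mathfrak{f}/\mathfrak{f}^{(3)}$ is free (as a $2$-step nilpotent Lie algebra).

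Next, we identify $d$ on $\mathfrak{f}_2^*$ using formula (\ref{d}). For $\omega\in\mathfrak{f}_2^*$ and $X,Y\in\mathfrak{f}_1$, we have $d\omega(X,Y)=-\omega([X,Y])$. So $d|_{\mathfrak{f}_2^*}$ is, up to sign, the transpose of the bracket map $\beta:\bigwedge^2\mathfrak{f}_1\to\mathfrak{f}_2$, $X\wedge Y\mapsto[X,Y]$. The map $\beta$ is surjective, because $\mathfrak{f}$ is Carnot, so $\mathfrak{f}_2=[\mathfrak{f}_1,\mathfrak{f}_1]$. Hence $d|_{\mathfrak{f}_2^*}$ is injective, and it is surjective exactly when $\beta^\top$ is onto, i.e. when $\beta$ is injective. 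Thus
$$
H^{2,2}(\mathfrak{f})=0\iff \beta:{\bigwedge}^2\mathfrak{f}_1\to\mathfrak{f}_2 \text{ is an isomorphism}\iff \dim\mathfrak{f}_2=\tbinom{\dim\mathfrak{f}_1}{2}.
$$

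Finally, we relate this to freeness. The quotient $\mathfrak{f}/\mathfrak{f}^{(3)}\cong\mathfrak{f}_1\oplus\mathfrak{f}_2$ is the $2$-step Carnot algebra with bracket $\beta$. The free $2$-step nilpotent Lie algebra on $V=\mathfrak{f}_1$ is $V\oplus\bigwedge^2V$ with the tautological bracket. Every $2$-step Carnot algebra generated by $V$ is a quotient of it, by the map that is the identity on $V$ and $\beta$ on $\bigwedge^2V$. This quotient map is an isomorphism precisely when $\beta$ is injective. This gives both directions.

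The only step needing care is the definition of \emph{free}, meaning free in the category of $2$-step nilpotent Lie algebras on $\dim\mathfrak{f}_1$ generators, together with the degenerate case $\dim\mathfrak{f}_1=1$. In that case $\bigwedge^2\mathfrak{f}_1=0$ and both sides hold trivially, consistent with the convention on weights. Otherwise the argument is pure linear algebra, with no real obstacle.
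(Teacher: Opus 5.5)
Your proposal is correct and follows the paper's proof essentially step for step. Both arguments note that all weight-$2$ two-forms are closed, so $H^{2,2}={\bigwedge}^2\mathfrak{f}_1^*/d(\mathfrak{f}_2^*)$; both use formula (\ref{d}) to identify $d$ on $\mathfrak{f}_2^*$ with the transpose of the bracket ${\bigwedge}^2\mathfrak{f}_1\to\mathfrak{f}_2$, and turn surjectivity of that transpose into injectivity of the bracket, i.e.\ freeness of $\mathfrak{f}_1\oplus\mathfrak{f}_2$. Your extra remarks (surjectivity of the bracket from the Carnot condition, the dimension count, and the degenerate case $\dim\mathfrak{f}_1=1$) are correct and only spell out what the paper leaves implicit.
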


\begin{pf}
$w_{min}(\mathfrak{f})\ge 3$ means that the weight $2$ component $H^{2,2}$ of $H^2(\mathfrak{f})$ vanishes. Weight $2$ $2$-forms are automatically closed (since weight $2$ $3$-forms vanish). So
$$
H^{2,2}={\bigwedge}^{2,2}/(d{\bigwedge}^{1,2}).
$$
Formula (\ref{d}) shows that 
\begin{align*}
H^{2,2}=0 &\iff [\cdot,\cdot]^*: \mathfrak{f}_{2}^*\to{\bigwedge}^2\mathfrak{f}_{1}^*\to  \text{ is onto}\\
&\iff [\cdot,\cdot]: {\bigwedge}^2\mathfrak{f}_{1}\to \mathfrak{f}_{2} \text{ is injective}\\
&\iff \mathfrak{f}_{1}\oplus \mathfrak{f}_{2} \text{ is a free $2$-step Lie algebra}.
\end{align*}

\end{pf}

\begin{remark}
Step $2$ Carnot Lie algebras cannot provide examples.
\end{remark}

Let $\mathfrak{f}$ be a Carnot Lie algebra such that $w_{min}(\mathfrak{f})\ge 3$, and $\mathfrak{g}$ a $2$-step Carnot Lie algebra that has the same dimension $m$ and homogeneous dimension $Q$ as $\mathfrak{f}$. Let $n_1$ and $n_2$ be the dimensions of its levels. The equations $n_1+n_2=m$ and $n_1+2n_2=Q$ yield $n_1=2m-Q$, $n_2=Q-m$. On the other hand, if $p_i=\mathrm{dim}(\mathfrak{f}_i)$, 
\begin{align*}
n_1&=2m-Q=\sum (2-i)p_i\le p_1-p_3,\\
n_2&=Q-m=\sum (i-1)p_i\ge p_2+2p_3.
\end{align*}
Since $\mathfrak{f}/\mathfrak{f}^{(3)}$ is free, $p_2=\frac{p_1(p_1-1)}{2}$, so
\begin{align*}
\frac{p_1(p_1-1)}{2}&\le p_2\le n_2\le \frac{n_1(n_1-1)}{2}\le \frac{(p_1-p_3)(p_1-p_3-1)}{2}\\
&\le \frac{p_1(p_1-1)}{2},
\end{align*}
with equality implying $p_3=0$ and $p_2=n_2$, i.e. $\mathfrak{f}=\mathfrak{g}$ are free $2$-step, $w_{max}(\mathfrak{g})=w_{min}(\mathfrak{f})=3$. Thus a $2$-step Carnot Lie algebra cannot achieve the numerical conditions required by the construction described in Section \ref{construction}.

\subsection{Algebraic criteria for low $w_{max}$}

\begin{remark}
The value $w_{max}(\mathfrak{g})=2$ cannot be relaxed.
\end{remark}

Indeed, in the Riemannian case, $M$ and $N$ have dimension $n$. We need $n$-parabolic manifolds, hence $n\ge Q$. Setting $p=\frac{n}{2}$ and $q=n$, $\frac{1}{p}-\frac{1}{q}=\frac{1}{n}$. In order that $L^{n,n/2}H^2(M)\not= L^{n,n/2}H^2(N)$, one needs that
$$
\frac{w_{max}(\mathfrak{g})-1}{Q}\le \frac{1}{n}<\frac{w_{min}(\mathfrak{f})-1}{Q}.
$$
With the constraint $n\ge Q$, this can happen only if $w_{max}(\mathfrak{g})=2$, with $n=Q$. 

In the contact subRiemannian case, $M$ and $N$ have dimension $2n+1$. We need $2n+2$-parabolic manifolds, hence $2n+2\ge Q$. Setting $p=n+1$ and $q=2n+2$, $\frac{1}{p}-\frac{1}{q}=\frac{1}{2n+2}$. In order that $L^{2n+2,n+1}H^2(M)\not= L^{2n+2,n+1}H^2(N)$, one needs again that $w_{max}(\mathfrak{g})=2$, with $2n+2=Q$.

\subsection{Constructing the two examples}

Since the examples needed for our purposes are two 3-step 16-dimensional Carnot algebras and there is currently no complete classification of nilpotent (or even Carnot) Lie algebras, one needs to make sure that the algebras $\mathfrak{f}$ and $\mathfrak{g}$ we construct are well-defined. Furthermore, while explicit computations of the Lie algebra cohomology are available for complete classifications of nilpotent Lie algebras in low dimensions (see for example \cite{Gong,Magnin1,Magnin2,del Barco,Vergne}), we cannot rely on them in the present setting, and the relevant cohomological computations must therefore be carried out directly.

A first step is to verify that the Jacobi identity is satisfied once the bracket relations are specified. This can be streamlined by studying the action of the Chevalley-Eilenberg differential on left-invariant forms, as explained in the following remark.

\begin{remark}\label{remark: Jacobi with d^2=0}
    The Jacobi identity is equivalent to requiring $d^2=0$ on left-invariant 1-forms.

    For any $\theta\in\mathfrak{g}^\ast$ and any $X,Y,Z\in\mathfrak{g}$, we have
    \begin{align*}
        d^2\theta(X,Y,Z)=&d(d\theta)(X,Y,Z)\\=&-d\theta([X,Y],Z)+d\theta([X,Z],Y)-d\theta([Y,Z],X)\\=&\theta([[X,Y],Z])-\theta([[X,Z],Y])+\theta([[Y,Z],X])\\=&\theta([[X,Y],Z]+[[Y,Z],X]+[[Z,X],Y])=0
    \end{align*}
    Moreover, this means that $d^2$ always vanishes when applied to left-invariant forms in $\mathfrak{g}_2^\ast$, so it is sufficient to check this condition on elements in $\mathfrak{g}_3^\ast$. 
\end{remark}
We are going to present the two Carnot Lie algebras $\mathfrak{f}$ and $\mathfrak{g}$ by providing a choice of a basis together with the list of non-zero bracket relations. Since these two examples are 16-dimensional, we will use a different letter for the elements of the basis depending on the layer they belong to, with the aim of making the construction somewhat clearer to the reader. Namely, we will denote by $X_i, T_j$ and $W_k$ the basis elements in the first, second, and third layer, respectively. Moreover, we will denote their duals by $X_i^\ast=dx_i$, $T_j^\ast=\tau_j$, and $W_k^\ast=\omega_k$. 
\subsubsection{Constructing $G$}
Let us start by presenting the Carnot Lie algebra $\mathfrak{g}$ with $w_{max}=2$ since this will require much more work than $\mathfrak{f}$. We are aiming to construct a 16-dimensional Carnot Lie algebra with stratification $\mathfrak{g}_1\oplus\mathfrak{g}_2\oplus\mathfrak{g}_3$, and homogeneous dimension $Q=28$. This can be achieved by taking
\begin{align*}
    \operatorname{dim}(\mathfrak{g}_1)=8\ ,\ \operatorname{dim}(\mathfrak{g}_2)=4\ ,\ \operatorname{dim}(\mathfrak{g}_3)=4\,.
\end{align*}
This particular choice of dimensions will become apparent after the following considerations.

In the case of a 3-step stratification, the Lie algebra cohomology will appear a priori in several possible weights. In weight 2, we have
\begin{align*}
    H^{2,2}\mathfrak{g}^\ast={\bigwedge}^{2,2}\mathfrak{g}^\ast/\big(d{\bigwedge}^{1,2}\mathfrak{g}^\ast\big)
\end{align*}
since $\ker d\cap{\bigwedge}^{2,2}\mathfrak{g}^\ast={\bigwedge}^{2,2}\mathfrak{g}^\ast$ of dimension $\binom{\operatorname{dim}(\mathfrak{g}_1)}{2}$, while the subspace $d{\bigwedge}^{1,2}\mathfrak{g}^\ast$ has dimension equal to $\operatorname{dim}(\mathfrak{g}_2)$.

In weight 3, we have
\begin{align*}
    H^{2,3}\mathfrak{g}^\ast=\big(\ker d\cap{\bigwedge}^{2,3}\mathfrak{g}^\ast\big)/\big(d{\bigwedge}^{1,3}\mathfrak{g}^\ast\big)\,.
\end{align*}
In this case, we have that $\ker d\cap{\bigwedge}^{2,3}\mathfrak{g}^\ast$ is a subspace of ${\bigwedge}^{2,3}\mathfrak{g}^\ast$ of dimension $\operatorname{dim}(\mathfrak{g}_1)\cdot\operatorname{dim}(\mathfrak{g}_2)$. Similarly to before, the dimension of $d{\bigwedge}^{1,3}\mathfrak{g}^\ast$ is equal to $\operatorname{dim}(\mathfrak{g}_3)$.

For weight greater than or equal to 4, the expression for the Lie algebra cohomology simplifies to
\begin{align*}
    H^{2,4}\mathfrak{g}^\ast=\ker d\cap{\bigwedge}^{2,4}\mathfrak{g}^\ast\,.
\end{align*}
The requirement $w_{max}=2$ then translates to the following two conditions
\begin{itemize}
    \item[i.] $\ker d\cap{\bigwedge}^{2,3}\mathfrak{g}^\ast=d{\bigwedge}^{1,3}\mathfrak{g}^\ast$, and
    \item[ii.] the map $d\colon{\bigwedge}^{2,4}\mathfrak{g}^\ast\longrightarrow{\bigwedge}^{3,4}\mathfrak{g}^\ast$ is injective.
\end{itemize}
In our specific example, since $\operatorname{dim}(\mathfrak{g}_3)=4$, the subspace $\ker d\cap{\bigwedge}^{2,3}\mathfrak{g}^\ast$ needs to be 4-dimensional, while $\ker d\cap{\bigwedge}^{2,4}\mathfrak{g}^\ast$ has to be trivial.

Let us denote by $\{X_1,X_2,X_3,X_4,X_5,X_6,X_7,X_8\}$ a basis of the 8-dimensional first layer $\mathfrak{g}_1$. Since we are requiring $\operatorname{dim}(\mathfrak{g}_2)=4$, let us denote by $\{T_1,T_2,T_3,T_4\}$ its basis. Let us impose
\begin{align*}
    [X_1,X_2]=[X_3,X_4]=T_1\ ,\ [X_2,X_3]=[X_4,X_5]=T_2\\
    [X_3,X_5]=[X_4,X_6]=T_3\ ,\ [X_1,X_3]=[X_2,X_4]=[X_5,X_6]=T_4\,,
\end{align*}
so that
\begin{itemize}
    \item $d\tau_1=-dx_1\wedge dx_2-dx_3\wedge dx_4$;
    \item $d\tau_2=-dx_2\wedge dx_3-dx_4\wedge dx_5$;
    \item $d\tau_3=-dx_3\wedge dx_5-dx_4\wedge dx_6$;
    \item $d\tau_4=-dx_1\wedge dx_3-dx_2\wedge dx_4-dx_5\wedge dx_6$.
\end{itemize}
Let us first study the differential map 
\begin{align*}
    d\colon{\bigwedge}^{2,3}\mathfrak{g}^\ast\longrightarrow{\bigwedge}^{3,3}\mathfrak{g}^\ast
\end{align*}
from the $8\cdot 4=28$-dimensional space of 2-forms of weight 3 to the $\binom{8}{3}=56$-dimensional space of 3-forms of weight 3. Direct computations show that the kernel of this map is a 4-dimensional space, just as we needed, and a convenient basis can be taken as
\begin{align*}
-dx_1\wedge\tau_1+dx_4\wedge\tau_4+dx_5\wedge\tau_3\ ,\ -dx_3\wedge\tau_2-dx_4\wedge\tau_3\\
-dx_2\wedge\tau_1-dx_3\wedge\tau_4+dx_6\wedge\tau_3\ ,\ dx_2\wedge\tau_1-dx_4\wedge\tau_2\,.
\end{align*}
It is therefore sufficient to take the 4-dimensional $\mathfrak{g}_3$ spanned by
\begin{align*}
    W_1=&[X_1,T_1]=-[X_5,T_3]=-[X_4,T_4]\ ,\\
    W_2=[X_3,&T_2]=[X_4,T_3]\ ,\ W_3=[X_3,T_4]=-[X_6,T_3]\ ,\\
    W_4=&[X_4,T_2]=[X_3,T_4]-[X_2,T_1]=W_3-[X_2,T_1]\,,
\end{align*}
so that
\begin{itemize}
    \item $d\omega_1=-dx_1\wedge\tau_1+dx_4\wedge\tau_4+dx_5\wedge\tau_3$;
    \item $d\omega_2=-dx_3\wedge\tau_2-dx_4\wedge\tau_3$;
    \item $d\omega_3=-dx_2\wedge\tau_1-dx_3\wedge\tau_4+dx_6\wedge\tau_3$;
    \item $d\omega_4=dx_2\wedge\tau_1-dx_4\wedge\tau_2$.
\end{itemize}
For these choices of brackets, we get that
\begin{align*}
    d{\bigwedge}^{1,3}\mathfrak{g}^\ast=\ker d\cap{\bigwedge}^{2,3}\mathfrak{g}^\ast\,,
\end{align*}
and so requirement i. is satisfied. Notice that the fact that $d{\bigwedge}^{1,3}\mathfrak{g}^\ast=\ker d\cap{\bigwedge}^{2,3}\mathfrak{g}^\ast$ readily implies that $d^2=0$ on 1-forms of weight 3, and so the Jacobi identity for the Lie algebra $\mathfrak{g}$ is satisfied by Remark \ref{remark: Jacobi with d^2=0}.

Finally, we are left to check that requirement ii. also holds for such a choice of Lie brackets. The space of 3-forms of weight 4 is spanned by forms in ${\bigwedge}^{2}\mathfrak{g}_1^\ast\otimes\mathfrak{g}_2^\ast$. The contributions arising from the generators $dx_i\wedge\omega_j$ and $\tau_k\wedge\tau_l$ are sufficiently independent that no nontrivial linear combination can be annihilated by the differential. Equivalently, the structure constants defining the third layer introduce no additional relations among 2-forms of weight 4 beyond those already encoded by skew-symmetry. 
It then follows that the differential $d\colon{\bigwedge}^{2,4}\mathfrak{g}^\ast\to{\bigwedge}^{3,4}\mathfrak{g}^\ast$ is injective. This injectivity can be verified by an explicit computation, which may be carried out using a computer algebra system such as Sage \cite{GIT}.

Combining the computations in all homogeneous weights, we obtain
$$H^{2,w}\mathfrak g=0
\ \text{ for }\ w=3,4,5,6\,,$$
while
$$H^{2,2}\mathfrak g ={\bigwedge}^{2,2}\mathfrak{g}^\ast/ \big(d{\bigwedge}^{1,2}\mathfrak{g}^\ast\big)={\bigwedge}^2\mathfrak{g}_1/(d\,\mathfrak{g}_2^\ast)$$
so that
$$H^2\mathfrak g=H^{2,2}\mathfrak g$$
with
$\operatorname{dim}( H^{2,2}\mathfrak g)=28-4=24$, and so $w_{max}=2$.

\subsubsection{Constructing $F$}
We are looking for a 3-step nilpotent Lie group $F$ whose Carnot Lie algebra $\mathfrak{f}$ has $w_{min}(\mathfrak{f})\ge 3$. According to Lemma \ref{lem: algebraic criterion for high w_min}, this implies that $\mathfrak{f}/\mathfrak{f}_3$ is free. In other words, this imposes a strict condition on the dimension of the second layer $\mathfrak{f}_2$, namely $$\operatorname{dim}(\mathfrak{f}_2)=\binom{\operatorname{dim}(\mathfrak{f}_1)}{2}\,.$$

For our example, we are specifically looking for a 16-dimensional Carnot Lie algebra $\mathfrak{f}$ with homogeneous dimension $Q=28$. This is achieved by taking
\begin{align*}
    \operatorname{dim}(\mathfrak{f}_1)=5\ ,\ \operatorname{dim}(\mathfrak{f}_2)=\binom{5}{2}=10\ ,\ \operatorname{dim}(\mathfrak{f}_3)=1\,.
\end{align*}
An example of such a Carnot Lie algebra $\mathfrak{f}$ is given by the following non-trivial Lie brackets:
\begin{align*}
    [X_1,X_2]=T_1\ ,\ [X_1,X_3]=T_2\ ,\ [X_1,X_4]=T_3\ ,\ [X_1,X_5]=T_4\\
    [X_2,X_3]=T_5\ ,\ [X_2,X_4]=T_6\ ,\ [X_2,X_5]=T_7 \ ,\ [X_3,X_4]=T_8\\
    [X_3,X_5]=T_9\ ,\ [X_4,X_5]=T_{10}\ ,\ [X_1,T_1]=W_1\quad\quad
\end{align*}
This is a well-defined Lie algebra since 
\begin{align*}
    d^2 \omega_1=-d(dx_1\wedge\tau_1)=-dx_1\wedge dx_1\wedge dx_2=0\,.
\end{align*}
Notice that we could have chosen some alternative bracket relation for the element $W$. For example, the choice $[X_1,T_1]=[X_4,T_8]=\tilde{W}_1$ would have also worked since $d^2\tilde{\omega}_1=-d(dx_1\wedge\tau_1+dx_4\wedge\tau_8)=0$. This means that there are several non-isomorphic Carnot Lie algebras of dimension 16 that would suit our needs (as long as $\operatorname{dim}(\mathfrak{f}_3)=1$).

\end{document}